\documentclass[11pt,a4paper,reqno]{amsart}
\usepackage[margin=1.1in]{geometry}
\usepackage{amssymb,amsmath,graphicx,amsfonts}
\usepackage{mathrsfs}
\usepackage{multirow}
\usepackage{graphicx,epsfig}
\usepackage{color}
\usepackage{enumerate,enumitem}
\usepackage{empheq}
\usepackage{cases}
\usepackage{cite}
\usepackage{mathrsfs}
\usepackage{amsgen}
\usepackage{amscd}

\allowdisplaybreaks

\usepackage{booktabs}
\usepackage{amsmath}
\usepackage{latexsym}
\usepackage{amssymb,amsmath,graphicx,amsfonts,euscript}
\usepackage{amsfonts,bm,hyperref,subcaption}
\usepackage{enumerate}
\newtheorem{theorem}{Theorem}[section]

\theoremstyle{definition}

\newtheorem{example}[theorem]{Example}

\numberwithin{theorem}{section}
\numberwithin{equation}{section}

\def\le{\leqslant}
\def\ge{\geqslant}
\def\Omega{\varOmega}
\def\Delta{\varDelta}
\def\bex{\begin{exercise}\upshape}
\def\eex{\end{exercise}}

\numberwithin{equation}{section}
\begin{document}

\title[]{Dual formulations of geometric curvature flows and their discretizations}\thanks{This work was partially supported by the National Natural Science Foundation of China (Project No. 12525111) and the Research Grants Council of Hong Kong (Project No. 15301321 and RFS2324-5S03).}

\author[]{Guangwei Gao,\,\, Buyang Li,\,\, and\,\,Rong Tang}
\address{Guangwei Gao, Buyang Li, Rong Tang: Department of Applied Mathematics, The Hong Kong Polytechnic University, Hong Kong. 
{\rm Email address: {\tt guang-wei.gao@polyu.edu.hk}, {\tt buyang.li@polyu.edu.hk} and {\tt claire.tang@polyu.edu.hk}
}}

\subjclass[2020]{35R01, 53C44, 53E40, 65M12, 65M60}


\keywords{Surface evolution, mean curvature flow, surface diffusion, moving contact line, parametric finite element method, energy stability, dual formulation, dual multiplier.}

\maketitle

\begin{abstract}
  We propose new formulations of geometric curvature flows---referred to as \emph{dual formulations}---that are equivalent to the original formulations but provide a novel framework for constructing linearly implicit and energy-stable schemes for curvature-driven surface evolution, including mean curvature flow, surface diffusion, and solid-state dewetting on a substrate with a moving contact line. The dual formulations are derived by introducing, at the continuous level, an additional unknown in the form of a dual multiplier. This augmentation does not alter the continuous dynamics but makes the underlying energy-dissipation structure explicit and, in turn, enables a systematic design of linearly implicit discretizations that inherit energy stability. A key feature of this framework is that it accommodates a broad class of artificial tangential motions which can be used to maintain good mesh quality of the computed surfaces. As an illustration, we combine the framework with the minimal-deformation-rate (MDR) tangential motion, leading to what we call the \emph{dual-MDR} scheme. The resulting method is linearly implicit and energy-stable, while retaining the MDR tangential motion to maintain good mesh quality. Extensive numerical experiments demonstrate the convergence of the proposed schemes, their structure-preserving properties, and advantages on representative benchmark problems.
\end{abstract}





\setlength\abovedisplayskip{4pt}
\setlength\belowdisplayskip{4pt}

\section{Introduction}


Geometric curvature flows, which govern curvature-driven surface evolution, have attracted sustained interest in mathematics and physics, largely due to their ability to model a wide range of interfacial phenomena, including crystal growth \cite{allen1979microscopic,tiller1991science}, thin-film and foam morphology \cite{ishida2017hyperbolic,Thompson1}, grain-boundary migration \cite{Gottstein2009}, and surfactant-laden two-phase flows \cite{worner2012numerical}. The development and analysis of stable, convergent numerical methods for curvature flows has therefore become an active area of research in engineering and computational mathematics.

This work concerns the development of structure-preserving numerical methods for computing curvature-driven surface evolution, with a focus on mean curvature flow and surface diffusion on closed surfaces, as well as solid-state dewetting on a substrate with a moving contact line. An evolving surface \(\Gamma(t)\subset\mathbb{R}^3\), \(t\in[0,T]\), is typically determined by a normal-velocity law of the form
\begin{subequations}\label{eq:normal-v}
\begin{align}
v\cdot n &= -H && \text{(mean curvature flow)}, \label{eq:mean curvature flow-vn}\\
v\cdot n &= \Delta_{\Gamma(t)} H && \text{(surface diffusion)}. \label{eq:surface diffusion-vn}
\end{align}
\end{subequations}
where \(H\) and \(n\) denote the mean curvature and unit normal of \(\Gamma(t)\), and $\Delta_{\Gamma(t)} $ denotes the surface Laplace--Beltrami operator. The evolving surface $\Gamma(t)$ with initial condition $\Gamma(0)=\Gamma^0$ can be represented as the image $\Gamma(t)=\{X(p,t):p\in \Gamma^0\}$ of a flow map \(X(\cdot,t):\Gamma^0\to\mathbb{R}^3\) satisfying
$$
\partial_t X(\cdot,t)=v(X(\cdot,t),t)
\,\,\,\text{on}\,\,\, \Gamma^0,\,\,\,\mbox{with}\,\,\, X(\cdot,0)=\mathrm{id}.
$$
In solid-state dewetting, the surface diffusion law \eqref{eq:surface diffusion-vn} is additionally coupled with boundary conditions on the moving contact line between the surface and the substrate. 

A fundamental feature of these curvature flows is their energy-dissipation structure. For a closed surface \(\Gamma(t)\), differentiating the surface-area energy \(|\Gamma(t)|\), applying integration by parts, and using the geometric identity 
\begin{align}\label{eq:id-H}
Hn = -\Delta_{\Gamma}\mathrm{id}\quad\mbox{with}\,\,\,{\rm id}(x)\equiv x ,
\end{align}
one obtains (see \cite[Theorem~32]{BarrettGarckeNurnberg2020}) 
\begin{equation}\label{eq:der-St}
\frac{\mathrm{d}}{\mathrm{d} t} |\Gamma(t)|
= \int_{\Gamma(t)} \nabla_{\Gamma(t)} \cdot v
= \int_{\Gamma(t)} \nabla_{\Gamma(t)} \mathrm{id} \cdot \nabla_{\Gamma(t)} v
= \int_{\Gamma(t)} H n \cdot v .
\end{equation}
Substituting \eqref{eq:mean curvature flow-vn} and \eqref{eq:surface diffusion-vn}
into~\eqref{eq:der-St} gives the energy-dissipation laws
\begin{equation}\label{eq:area-decay}
\frac{\mathrm{d}}{\mathrm{d} t} |\Gamma(t)| =
\begin{cases}
- \displaystyle\int_{\Gamma(t)} |H|^2  \le 0,
& \text{(mean curvature flow)}, \\[2mm]
- \displaystyle\int_{\Gamma(t)} |\nabla_{\Gamma(t)} H|^2  \le 0,
& \text{(surface diffusion)}.
\end{cases}
\end{equation}
In the solid-state dewetting setting, the relevant energy is the total free energy
\(W(t):=|\Gamma(t)|-\cos\theta\,|S_1(t)|\), where \(S_1(t)\) denotes the evolving
film--substrate interface and \(\theta\) is the prescribed contact angle. Analogous to the
closed-surface case, the solid-state dewetting system satisfies the energy-dissipation inequality
\(\frac{\mathrm{d}}{\mathrm{d} t} W(t)\le 0\).

A natural and widely used framework for computing curvature-driven surface evolution is provided by parametric finite element methods (FEMs), initiated by Dziuk in \cite{dziuk1990algorithm} and subsequently extended and refined for a broad range of geometric flows and related problems; see, e.g., \cite{bonito2010parametric,dziuk2008computational,bansch2005finite,elliott2015evolving,kovacs2018higher,deckelnick2018stability,dziuk2007finite,dziuk2013finite,jiang2021perimeter}. In this approach, one computes triangulated surfaces $\Gamma_h^{m}$, $m=1,2,\dots$, by the surface FEM, as approximations to the surface $\Gamma(t_m)$ at discrete time levels $t_m=m\tau$, $m=1,2,\dots$, 
where \(\tau\) denotes the time-step size. Within the parametric finite element framework, two guiding principles have emerged in the modern design of discretizations: preserving the underlying energy dissipation and maintaining good mesh quality. On the one hand, energy-stable schemes tend to be more robust for long-time simulations and in regimes where singular features (such as pinch-off) may form; see Fig.~\ref{fig:Box118-MDR}. On the other hand, curvature-driven evolution can induce large deformations, and without an appropriate mesh-control mechanism the discrete surface may suffer from severe element distortion or even degeneracy, leading to a loss of geometric accuracy and, in extreme cases, breakdown of the computation.



Dziuk's original parametric FEM in \cite{dziuk1990algorithm} advances the discrete surface \(\Gamma_h^{m}\) by a one-step flow map $X_h^{m+1}=\mathrm{id}+\tau v_h^{m+1}:\Gamma_h^{m}\to\Gamma_h^{m+1}$, 
where the discrete velocity \(v_h^{m+1}\) is the finite element solution of the velocity law without tangential motion:
$$
v=-Hn\approx \Delta_{\Gamma}\bigl(\mathrm{id}+\tau v\bigr) .
$$
Specifically, denoting by $S_h(\Gamma_h^{m})$ the Lagrange finite element space on $\Gamma_h^{m}$, the discrete velocity \(v_h^{m+1}\in S_h(\Gamma_h^{m})^3\) is determined from the weak formulation:
\begin{align}\label{Dziuk-weak-eqn} 
\int_{\Gamma_h^{m}} v_h^{m+1}\cdot \eta_h
\;+\;
\int_{\Gamma_h^{m}} \nabla_{\Gamma_h^{m}} X_h^{m+1} \cdot \nabla_{\Gamma_h^{m}} \eta_h
= 0,
\quad \forall\, \eta_h \in S_h(\Gamma_h^{m})^3.
\end{align} 
The convergence of Dziuk's scheme has been established for curve and surface evolution under various conditions (e.g., bounded deformation and the use of higher-order finite elements); see
\cite{deckelnick1995approximation,dziuk1994convergence,li2020convergence,ye2021convergence,li2021convergence,bai2023new}. In addition, the method inherits the energy stability. This can be seen by choosing $\eta_h=v_h^{m+1}$ in \eqref{Dziuk-weak-eqn} and utilizing the following geometric inequality (see \cite[(2.21)]{barrett2008hypersurfaces} or \cite[(2.31)]{barrett2007parametric}): 
\begin{align}\label{area-ineq}
\int_{\Gamma_h^{m}} \nabla_{\Gamma_h^{m}} X_h^{m+1} \cdot \nabla_{\Gamma_h^{m}} (\tau v_h^{m+1})
\;\ge\;
|\Gamma_h^{m+1}|-|\Gamma_h^{m}|.
\end{align}
However, since no tangential motion is included to control the distribution of mesh points, the evolving mesh can become highly skewed and may even degenerate when the surface experiences large deformations. 

Since the surface shape is determined only by the normal velocity, Barrett, Garcke, and N\"urnberg \cite{barrett2007parametric,barrett2008hypersurfaces,barrett2008willmore} proposed a variational framework that augments the evolution by an artificial tangential velocity for maintaining good mesh quality in the computed surfaces. For mean curvature flow, the BGN method can be equivalently written as finding a one-step flow map $X_h^{m+1}=\operatorname{id}+\tau v_h^{m+1}:\Gamma_h^{m}\rightarrow \Gamma_h^{m+1}$, determined by a surface velocity \(v_h^{m+1}\in S_h(\Gamma_h^{m})^3\), such that
    \begin{align}\label{BGN-weak-eqn}
        \int_{\Gamma_h^{m}}^{(h)} (v_h^{m+1} \cdot n_h^{m})\,(\eta_h \cdot n_h^{m})+ 
        \int_{\Gamma_h^{m}} \nabla_{\Gamma_h^{m}} X_h^{m+1} \cdot \nabla_{\Gamma_h^{m}} \eta_h &=0, \quad \forall \eta_h \in S_h(\Gamma_h^{m})^3,
    \end{align}
where \(\int_{\Gamma_h^{m}}^{(h)}\) denotes a mass-lumped surface integral, and \(n_h^{m}\) is the piecewisely defined normal vector on $\Gamma_h^{m}$. For any \(\eta_h\) that is tangential at every node, i.e.,
\(I_h(\eta_h\cdot n_h^{m})=0\) where \(I_h\) is the mass-lumped Lagrange interpolation operator defined in \eqref{eq:mass-lumped-int}, the BGN formulation implies
$\int_{\Gamma_h^{m}} \nabla_{\Gamma_h^{m}} X_h^{m+1} \cdot \nabla_{\Gamma_h^{m}} \eta_h = 0$. 
This identity shows that the one-step map $X_h^{m+1}:\Gamma_h^{m}\to\Gamma_h^{m+1}$ is discretely harmonic and therefore minimizes the tangential deformation from $\Gamma_h^{m}$ to $\Gamma_h^{m+1}$, thereby helping prevent severe distortion of the evolving surface triangulation. Beyond its favorable mesh-quality behavior, the BGN method also preserves a discrete area-dissipation law, thanks to \eqref{area-ineq}. Owing to this combination of mesh-quality control and energy stability, the BGN-type schemes are often more robust than Dziuk's original method in challenging simulations (including large deformations where Dziuk's method may fail),
and they have therefore been widely used for complex interfacial dynamics; see
\cite{fu2020arbitrary,Bao2021,Bao2023,Bai-Li-MCOM2025}.

Formally, as the time-step size \(\tau\to 0\), the BGN method can be interpreted as a purely spatial discretization of 
$$
(v\cdot n) n = \Delta_{\Gamma} \mathrm{id} .
$$ 
This relation prescribes only the normal velocity, and leaves the tangential motion undetermined. This lack of tangential information can translate, after spatial discretization, into an instability of the node motion: the computed velocity may no longer induce effective mesh redistribution. Consequently, when \(\tau\) is very small, the mesh quality can deteriorate in practice; see Figs.~\ref{fig:dum-BGN} and \ref{fig:Box118_BGN2}. 
Quantitative guidance for choosing \(\tau\) so as to reliably maintain good mesh quality is still lacking.

Motivated by this, Hu and Li~\cite{hu2022evolving} proposed a well-posed variant that can be interpreted as the sequential limit $\lim_{\tau\rightarrow0}\lim_{h\rightarrow0}$ of the BGN method. For mean curvature flow, they introduce a tangential motion by choosing, among all velocities satisfying the normal constraint
\(v\cdot n=-H\), the one that minimizes the deformation-rate energy
\begin{align}\label{DR-energy}
E(v)=\int_{\Gamma} |\nabla_{\Gamma} v|^2 .
\end{align}
This leads to the \emph{minimal-deformation-rate} (MDR) formulation
\begin{subequations}\label{eq:MDR-MCF}
\begin{align}
-\Delta_{\Gamma} v &= \kappa n, \label{eq:MDR-equ-v} \\
v \cdot n &= -H = \Delta_{\Gamma} \mathrm{id} \cdot n, \label{eq:MDR-equ-kappa} 
\end{align}
\end{subequations}
where \(\kappa\) is the Lagrange multiplier enforcing the normal-velocity constraint. The connection between the BGN and MDR viewpoints has been further exploited in
\cite{hu2022evolving,Bai-Li-MCOM2025,gao2026energy}. 

A direct discretization of \eqref{eq:MDR-MCF} reads as follows: find \((v_h^{m+1},\kappa_h^{m+1})\in S_h(\Gamma_h^{m})^3\times S_h(\Gamma_h^{m})\), which defines the one-step flow map $X_h^{m+1}=\mathrm{id}+\tau v_h^{m+1}:\Gamma_h^{m}\to \Gamma_h^{m+1}$, such that 
\begin{subequations}\label{MDR-weak}
    \begin{align}
        \int_{\Gamma_h^{m}} \nabla_{\Gamma_h^{m}} v_h^{m+1} \cdot \nabla_{\Gamma_h^{m}} \eta_h &=\int_{\Gamma_h^{m}} \kappa_h^{m+1}\hat n_h^{m} \cdot \eta_h , \\
        \int_{\Gamma_h^{m}} (v_h^{m+1} \cdot \hat n_h^{m})\,\phi_h + 
        \int_{\Gamma_h^{m}} \nabla_{\Gamma_h^{m}} X_h^{m+1} \cdot \nabla_{\Gamma_h^{m}} (\phi_h \hat n_h^{m}) &=0 ,
    \end{align}
\end{subequations}
for all $(\eta_h ,\phi_h) \in S_h(\Gamma_h^{m})^3 \times S_h(\Gamma_h^{m})$, where $\hat n_h^{m}\in S_h(\Gamma_h^{m})^3$ is the $L^2$-orthogonal projection of the piecewisely defined normal vector onto the finite element space. The resulting scheme maintains good mesh quality for both large and small time-step sizes, and its convergence was proved in \cite{huang2026+MDR}. However, it does not preserve the energy stability and therefore is less robust when resolving pinch-off singularities; see Fig.~\ref{fig:Box118-MDR}. This motivates the development of \emph{linearly implicit, energy-stable} discretizations that can reliably capture singularity formation (as in BGN-type schemes) while retaining the mesh-quality advantages of the MDR approach. 



Beyond the MDR approach, several alternative strategies have been proposed to introduce an artificial tangential motion (primarily aimed at improving mesh quality). Elliott and Fritz \cite{elliott2016algorithms,elliott2017approximations} constructed a tangential
redistribution via a DeTurck-type reparametrization, which was later extended to torus-type surfaces in \cite{mierswa2020error}. In a related spirit, \cite{GaoLiTang2026} generated tangential motion through a harmonic map heat flow and proved convergence of the numerical scheme (for a prescribed external velocity field). Duan and Li \cite{duan2024new} proposed a minimal-deformation (MD) formulation by minimizing the deformation energy of the flow map
$X(\cdot,t):\Gamma(0)\to\Gamma(t)$. These works are mainly concerned with designing tangential velocities that yield better mesh redistribution, while discrete energy (area) stability is typically not addressed. A structure-preserving MD variant was subsequently proposed in \cite{gao-li-2025} via 
introducing a scalar Lagrange multiplier; however, the resulting scheme is
nonlinearly implicit.

In this paper, we construct a new family of continuous formulations, called dual formulations, for mean curvature flow, surface diffusion, and solid-state dewetting. While equivalent to the classical models at the continuous level, these reformulations are tailored for the design of parametric FEMs that are simultaneously \emph{linearly implicit}, \emph{energy stable}, and \emph{mesh-quality preserving} for a broad class of artificial tangential motions, including the MDR tangential motion discussed above.
%
%

For mean curvature flow equipped with the MDR tangential motion, the corresponding dual formulation is a coupled system for \((v,\kappa,\lambda,H)\):
\begin{subequations}\label{eq:dual-MDR-MCF}
\begin{align}
-\Delta_{\Gamma} v &= \kappa n, \label{eq:dual-MDR-equ-v}\\
v \cdot n &= -H, \label{eq:dual-MDR-equ-kappa}\\
\Delta_{\Gamma} \lambda &= Hn + \Delta_{\Gamma} \mathrm{id}, \label{eq:dual-MDR-equ-lambda}\\
\lambda \cdot n &= 0 , \label{eq:dual-MDR-equ-H}
\end{align}
\end{subequations}
where \(v\) and \(\lambda\) are vector-valued unknowns, while \(\kappa\) and \(H\) are scalar-valued. \emph{At the continuous level, \eqref{eq:dual-MDR-MCF} is equivalent to the MDR formulation \eqref{eq:MDR-MCF}} in the following sense (shown in the next section): a quadruple \((v,\kappa,\lambda,H)\) is a solution of \eqref{eq:dual-MDR-MCF} if and only if \(\lambda\equiv 0\), \(H=-\Delta_\Gamma \mathrm{id}\cdot n\), and \((v,\kappa)\) is a solution of \eqref{eq:MDR-MCF}.
The auxiliary unknown \(\lambda\) is dual to \(v\) and is therefore referred to as a dual multiplier. Its role is not to alter the continuous dynamics, but to ensure that a \emph{linearly implicit discretization of \eqref{eq:dual-MDR-MCF} can incorporate the MDR tangential motion while preserving the underlying energy-stability structure.}

The dual formulation is not limited to mean curvature flow or to the MDR tangential motion. Rather, it provides a \emph{general framework} for constructing \emph{linearly implicit} and \emph{energy-stable} schemes for curvature-driven surface evolution while allowing for a \emph{broad class of artificial tangential motions}. In Section~2, we present the dual formulations and their discretizations for closed-surface evolution in mean curvature flow and surface diffusion. For clarity, we use the MDR tangential motion as a running example in the presentation of the dual formulations and their discretizations, and we include a subsection discussing extensions to other choices of tangential motion. Numerical experiments are presented to illustrate that the proposed discretizations can preserve energy stability and maintain good mesh quality simultaneously. In Section~3, we extend the approach to open surfaces with moving contact lines and report numerical results on benchmark problems that demonstrate the robustness of the approach in maintaining high-quality meshes. 



\section{Dual formulations and discretizations for closed-surface evolution}

In this section, we present the dual formulations and their discretizations for closed-surface evolution in mean curvature flow and surface diffusion. 

\subsection{Dual-MDR formulations}


The dual MDR formulation of mean curvature flow has been shown in \eqref{eq:dual-MDR-MCF}. Its equivalence to the original MDR formulation \eqref{eq:MDR-MCF} can be seen from the following arguments. 

%
Clearly, if \((v,\kappa)\) solves \eqref{eq:MDR-MCF}, then with \(H=-\Delta_{\Gamma}\mathrm{id}\cdot n\) (the mean curvature of \(\Gamma\)), the quadruple \((v,H,\lambda\equiv 0,\kappa)\) satisfies \eqref{eq:dual-MDR-MCF}.

Conversely, let \((v,H,\lambda,\kappa)\) be a solution of \eqref{eq:dual-MDR-MCF}. Testing \eqref{eq:dual-MDR-equ-lambda} with an arbitrary tangential vector field \(\eta\) and integrating by parts yields
$$
-\int_{\Gamma} \nabla_{\Gamma}\lambda \cdot \nabla_{\Gamma}\eta
= \int_{\Gamma} \bigl(Hn+\Delta_{\Gamma}\mathrm{id}\bigr)\cdot \eta 
= 0 ,
$$
where the last equality is because \(Hn\) and \(\Delta_{\Gamma}\mathrm{id}\) are both normal vector fields, whereas \(\eta\) is tangential. 
Moreover, \eqref{eq:dual-MDR-equ-H} implies that \(\lambda\) is tangential, so choosing \(\eta=\lambda\) gives
\[
\int_{\Gamma} \lvert \nabla_{\Gamma}\lambda\rvert^2 = 0.
\]
By the Poincar\'e inequality for tangential vector fields (see, e.g., \cite[Lemma 2.2]{hansbo2020analysis}), i.e.,
$$
\int_{\Gamma} \lvert \lambda\rvert^2
\le C_{\Gamma} \int_{\Gamma} \lvert \nabla_{\Gamma}\lambda\rvert^2 
\quad\mbox{(where $C_\Gamma$ is some constant depending on $\Gamma$)},
$$
it follows that \(\lambda\equiv 0\). Substituting \(\lambda\equiv 0\) into \eqref{eq:dual-MDR-equ-lambda} shows that \(H\) coincides with the mean curvature of \(\Gamma\). 
Consequently, \((v,\kappa)\) solves \eqref{eq:MDR-MCF}.

Analogously to the MDR formulation of mean curvature flow in \eqref{eq:MDR-MCF}, the MDR formulation of surface diffusion is given by
\begin{subequations}\label{eq:MDR-SD}
\begin{align}
-\Delta_{\Gamma} v &= \kappa n, \label{eq:MDR-SD-v}\\
v \cdot n &= \Delta_{\Gamma} H, \label{eq:MDR-SD-kappa}\\
H &= -\Delta_{\Gamma} \mathrm{id} \cdot n. \label{eq:MDR-SD-H}
\end{align}
\end{subequations}
Its dual formulation is the following coupled system for \((v,\kappa,\lambda,H)\):
\begin{subequations}\label{eq:dual-MDR-SD}
\begin{align}
-\Delta_{\Gamma} v &= \kappa n, \label{eq:dual-MDR-SD-v}\\
v \cdot n &= \Delta_{\Gamma} H, \label{eq:dual-MDR-SD-kappa}\\
\Delta_{\Gamma} \lambda &= Hn + \Delta_{\Gamma} \mathrm{id}, \label{eq:dual-MDR-SD-lambda}\\
\lambda \cdot n &= 0. \label{eq:dual-MDR-SD-H}
\end{align}
\end{subequations}
The equivalence of \eqref{eq:MDR-SD} and \eqref{eq:dual-MDR-SD} can be shown similarly. 


We now present the weak forms of the dual-MDR formulations \eqref{eq:dual-MDR-MCF} and \eqref{eq:dual-MDR-SD}, corresponding to mean curvature flow and surface diffusion, respectively. \medskip

\noindent\textit{Mean curvature flow:}
Find $(v, H, \lambda, \kappa) \in H^1(\Gamma)^3 \times H^1(\Gamma) \times H^1(\Gamma)^3 \times H^1(\Gamma)$ such that
\begin{subequations}\label{eq:mean curvature flow}
\begin{align}
\label{eq:mean curvature flow-H}
\int_{\Gamma} -H n \cdot w + \int_{\Gamma} \nabla_{\Gamma} \mathrm{id} \cdot \nabla_{\Gamma} w
&= \int_{\Gamma} \nabla_{\Gamma} \lambda \cdot \nabla_{\Gamma} w,\\
\label{eq:mean curvature flow-v}
\int_{\Gamma} (v \cdot n)\,\phi + \int_{\Gamma} H\,\phi
&= 0,\\
\label{eq:mean curvature flow-kappa}
\int_{\Gamma} \nabla_{\Gamma} v \cdot \nabla_{\Gamma} \eta
&= \int_{\Gamma} \kappa n \cdot \eta,\\
\label{eq:mean curvature flow-lambda}
\int_{\Gamma} \lambda \cdot n\,\varphi
&= 0,
\end{align}
\end{subequations}
for all $(w,\phi,\eta,\varphi)\in H^1(\Gamma)^3 \times H^1(\Gamma) \times H^1(\Gamma)^3 \times H^1(\Gamma)$.

\medskip
\noindent\textit{Surface diffusion:}
Find $(v, H, \lambda, \kappa) \in H^1(\Gamma)^3 \times H^1(\Gamma) \times H^1(\Gamma)^3 \times H^1(\Gamma)$ such that
\begin{subequations}\label{eq:surface diffusion}
\begin{align}
\label{eq:surface diffusion-H}
\int_{\Gamma} -H n \cdot w + \int_{\Gamma} \nabla_{\Gamma} \mathrm{id} \cdot \nabla_{\Gamma} w
&= \int_{\Gamma} \nabla_{\Gamma} \lambda \cdot \nabla_{\Gamma} w,\\
\label{eq:surface diffusion-v}
\int_{\Gamma} (v \cdot n)\,\phi + \int_{\Gamma} \nabla_{\Gamma} H \cdot \nabla_{\Gamma} \phi
&= 0,\\
\label{eq:surface diffusion-kappa}
\int_{\Gamma} \nabla_{\Gamma} v \cdot \nabla_{\Gamma} \eta
&= \int_{\Gamma} \kappa n \cdot \eta,\\
\label{eq:surface diffusion-lambda}
\int_{\Gamma} \lambda \cdot n\,\varphi
&= 0,
\end{align}
\end{subequations}
for all $(w,\phi,\eta,\varphi)\in H^1(\Gamma)^3 \times H^1(\Gamma) \times H^1(\Gamma)^3 \times H^1(\Gamma)$.


In the next subsection we show that the linearly implicit discretizations of \eqref{eq:mean curvature flow} and \eqref{eq:surface diffusion} preserve the energy-stability property automatically. This stands in contrast to linearly implicit discretizations of the original MDR formulations \eqref{eq:MDR-MCF} and \eqref{eq:MDR-SD}, for which energy stability is not guaranteed.

\subsection{Linearly implicit dual-MDR schemes for closed surfaces}

Let \(\Gamma_h^0\) be a triangulated surface in \(\mathbb{R}^3\) approximating the initial surface \(\Gamma^0\), whose elements are images of a reference triangle under affine maps. Let \(t_m=m\tau\), \(m=0,1,\ldots,N\), be a uniform partition of \([0,T]\), where \(\tau>0\) denotes the time-step size. We approximate the evolving surface $\Gamma(t_m)$ by a triangulated surface $\Gamma_h^m = \bigcup_{K \in \mathcal{K}_h^m} K$, where $\mathcal{K}_h^m$ denotes the set of triangles on $\Gamma_h^m$. 
The finite element space over \(\Gamma_h^{m}\) is defined by
\[
S_h(\Gamma_h^{m})=\left\{ v_h \in C^0(\Gamma_h^{m}) :  v_h \big|_{K}\ \text{is affine for all }K\in\mathcal{K}_h^{m}\right\}.
\]
The vector-valued finite element space is denoted by \(S_h(\Gamma_h^{m})^3\). 

For any triangle \(K\subset \Gamma_h^m\), we write \(K=\triangle(q_{\scriptscriptstyle K, 0},q_{\scriptscriptstyle  K,1},q_{\scriptscriptstyle  K,2})\), where the vertices \(q_{\scriptscriptstyle  K,0},q_{\scriptscriptstyle  K,1},q_{\scriptscriptstyle  K,2}\) are ordered counterclockwise as seen from the exterior of \(\Gamma_h^m\). The mass-lumped inner product on \(\Gamma_h^m\) is defined as 
\[
\int_{\Gamma_h^m}^{(h)} u \cdot w  := \sum_{K\in \mathcal{K}_h^m}\frac{|K|}{3}
\sum_{j=0}^2 \bigl((u\cdot w)|_K(q_{\scriptscriptstyle  K,j})\bigr),
\]
where \(|K|\) denotes the area of \(K\). For any piecewise-defined function \(f\) on \(\Gamma_h^m\) (possibly discontinuous across element interfaces), we define the mass-lumped Lagrange interpolant \(I_h^m f\in S_h(\Gamma_h^{m})\) via
\[
\int_{\Gamma_h^m}^{(h)} I_h^m f \cdot w = \int_{\Gamma_h^m}^{(h)}  f \cdot  w,\qquad \forall\, w\in S_h(\Gamma_h^{m}).
\]
It is straightforward to verify that this is equivalent to the nodal characterization
\begin{equation}\label{eq:mass-lumped-int}
  (I_h^m f)(q) =\frac{\sum_{K\ni q} f|_K(q) \cdot |K|}{\sum_{K\ni q}|K|},
  \qquad \text{for every vertex } q\in \Gamma_h^m.
\end{equation}

Based on the weak formulations \eqref{eq:mean curvature flow} and \eqref{eq:surface diffusion}, we propose the following \emph{dual-MDR scheme} for mean curvature flow and surface diffusion, where $n_h^m$ denotes the piecewise-constant unit normal vector on $\Gamma_h^m$.

\medskip
\noindent\textit{Mean curvature flow:}
Find $( v_h^{m+1},  H_h^{m+1}, \lambda_h^{m+1}, \kappa_h^{m+1}) \in S_h(\Gamma_h^{m})^3 \times S_h(\Gamma_h^{m}) \times S_h(\Gamma_h^{m})^3 \times S_h(\Gamma_h^{m})$ such that
\begin{subequations}\label{eq:num-mean curvature flow}
\begin{align}
\label{eq:num-mean curvature flow-H}
&-\int_{\Gamma_h^m}^{(h)}  H_h^{m+1} n_h^{m} \cdot w_h + \int_{\Gamma_h^m} \nabla_{\Gamma_h^m} ({\rm id} + \tau  v_h^{m+1}) \cdot \nabla_{\Gamma_h^m} w_h
= \int_{\Gamma_h^m} \nabla_{\Gamma_h^m} \lambda_h^{m+1} \cdot \nabla_{\Gamma_h^m} w_h, \\
  \label{eq:num-mean curvature flow-v}
&\int_{\Gamma_h^{m}}^{(h)} ( v_h^{m+1}\cdot n_h^{m}) \phi_h + \int_{\Gamma_h^{m}}^{(h)}  H_h^{m+1}  \phi_h 
  =0, \\
\label{eq:num-mean curvature flow-kappa}
&\int_{\Gamma_h^m} \nabla_{\Gamma_h^m}  v_h^{m+1} \cdot \nabla_{\Gamma_h^m} \eta_h = \int_{\Gamma_h^m}^{(h)} \kappa_h^{m+1} n_h^{m} \cdot \eta_h, \\
\label{eq:num-mean curvature flow-lambda}
&\int_{\Gamma_h^m}^{(h)} \lambda_h^{m+1} \cdot n_h^{m} \varphi_h = 0,
\end{align}
\end{subequations}
holds for all $(w_h, \phi_h, \eta_h, \varphi_h) \in S_h(\Gamma_h^{m})^3 \times S_h(\Gamma_h^{m})  \times S_h(\Gamma_h^{m})^3 \times S_h(\Gamma_h^{m}) $.

\medskip
\noindent\textit{Surface diffusion:}
Find $( v_h^{m+1},  H_h^{m+1}, \lambda_h^{m+1}, \kappa_h^{m+1}) \in S_h(\Gamma_h^{m})^3 \times S_h(\Gamma_h^{m}) \times S_h(\Gamma_h^{m})^3 \times S_h(\Gamma_h^{m})$ such that
\begin{subequations}\label{eq:num-surface diffusion}
\begin{align}
\label{eq:num-surface diffusion-H}
&-\int_{\Gamma_h^{m}}^{(h)} H_h^{m+1} n_h^{m} \cdot w_h + \int_{\Gamma_h^{m}} \nabla_{\Gamma_h^{m}}({\rm id} + \tau  v_h^{m+1}) \cdot \nabla_{\Gamma_h^{m}} w_h
= \int_{\Gamma_h^{m}} \nabla_{\Gamma_h^{m}} \lambda_h^{m+1} \cdot \nabla_{\Gamma_h^{m}} w_h, \\
\label{eq:num-surface diffusion-v}
&\int_{\Gamma_h^{m}}^{(h)} ( v_h^{m+1}\cdot n_h^{m}) \phi_h + \int_{\Gamma_h^{m}}\nabla_{\Gamma_h^{m}}  H_h^{m+1} \cdot \nabla_{\Gamma_h^m} \phi_h 
=0, \\
\label{eq:num-surface diffusion-kappa}
& \int_{\Gamma_h^{m}} \nabla_{\Gamma_h^{m}}  v_h^{m+1} \cdot \nabla_{\Gamma_h^{m}} \eta_h = \int_{\Gamma_h^{m}}^{(h)} \kappa_h^{m+1} n_h^{m} \cdot \eta_h,\\
 \label{eq:num-surface diffusion-lambda}
&\int_{\Gamma_h^m}^{(h)} \lambda_h^{m+1} \cdot n_h^{m} \varphi_h = 0,
\end{align}
\end{subequations}
holds for all $(w_h, \phi_h, \eta_h, \varphi_h) \in S_h(\Gamma_h^{m})^3 \times S_h(\Gamma_h^{m})  \times S_h(\Gamma_h^{m})^3 \times S_h(\Gamma_h^{m}) $.\medskip

After solving \eqref{eq:num-mean curvature flow} or \eqref{eq:num-surface diffusion}, the discrete surface is updated by
\[
\Gamma_h^{m+1} := X_h^{m+1}(\Gamma_h^m), \quad \text{where} \quad
X_h^{m+1} = {{\rm id} } + \tau  v_h^{m+1}.
\]

For the well-posedness analysis, we further define the averaged normal vector \(\hat{n}_h^m = I_h^m n_h^m \in S_h(\Gamma_h^{m})^3\). According to \eqref{eq:mass-lumped-int}, it satisfies
\[
\hat{n}_h^m(q) = \frac{\sum_{K\ni q} n_h^m|_K \cdot |K|}{\sum_{K\ni q}|K|},
\qquad \text{for every vertex } q\in \Gamma_h^m.
\]
The well-posedness of \eqref{eq:num-mean curvature flow} and \eqref{eq:num-surface diffusion} can be established under the following mild assumptions (similarly as \cite{Bao2023,li2021energy,garcke2025stable}).

\begin{theorem}[Well-posedness of the numerical scheme]\label{lm:num-wellpose}
Assume that the discrete surface $\Gamma_h^m$ satisfies the following mild conditions:
  \begin{enumerate}[label={{\rm(A\arabic*)}}]
    \item The elements are nondegenerate, i.e., for each \(K\in \mathcal{K}_h^{m}\) it holds that
    \(|K|>0\).
    \item For each vertex \(q\) of \(\Gamma_h^m\), the averaged normal vector satisfies
    \(\hat{n}_h^m(q)\neq {0}\) and
    \[
    \dim\!\Bigl(\operatorname{span}\{\hat{n}_h^m(q)\,:\, q \text{ is a vertex of } \Gamma_h^m\}\Bigr)=3.
    \]
  \end{enumerate}
Then, each of the numerical schemes \eqref{eq:num-mean curvature flow} and \eqref{eq:num-surface diffusion} admits a unique solution
\[
( v_h^{m+1},  H_h^{m+1}, \lambda_h^{m+1}, \kappa_h^{m+1}) \in S_h(\Gamma_h^{m})^3 \times S_h(\Gamma_h^{m}) \times S_h(\Gamma_h^{m})^3 \times S_h(\Gamma_h^{m}).
\]
\end{theorem}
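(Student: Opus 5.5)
Since each scheme is a square linear system for the nodal values of $(v_h^{m+1},H_h^{m+1},\lambda_h^{m+1},\kappa_h^{m+1})$, existence and uniqueness are equivalent to showing that the homogeneous system has only the trivial solution. The homogeneous system is obtained by deleting the data term $\int_{\Gamma_h^m}\nabla_{\Gamma_h^m}\mathrm{id}\cdot\nabla_{\Gamma_h^m}w_h$ from the first equation. Let $(v,H,\lambda,\kappa)$ be such a homogeneous solution.

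Two observations will be used throughout. First, by the definition of the mass-lumped inner product and \eqref{eq:mass-lumped-int}, for $u,\eta\in S_h(\Gamma_h^m)^3$ and scalar $\phi\in S_h(\Gamma_h^m)$ we have
\[
\int_{\Gamma_h^m}^{(h)}(u\cdot n_h^m)\phi=\sum_q \tfrac13\Big(\sum_{K\ni q}|K|\Big)\,\phi(q)\,u(q)\cdot\hat n_h^m(q).
\]
Hence an identity of the form $\int^{(h)}(u\cdot n_h^m)\phi=0$ for all $\phi$ gives $u(q)\cdot\hat n_h^m(q)=0$ at every vertex, using (A1) so that the weights are positive. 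Second, by (A1) the lumped norm $\int^{(h)}|f|^2$ is a genuine norm on $S_h$.

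\emph{Energy step.} I would test the first equation with $w_h=v$, the second with $\phi_h=H$, and the third with $\eta_h=\lambda$. By the symmetry of the lumped product, $\int^{(h)}\kappa n_h^m\cdot\lambda=\int^{(h)}(\lambda\cdot n_h^m)\kappa$. This vanishes by the fourth equation with $\varphi_h=\kappa$, so the coupling term $\int\nabla v\cdot\nabla\lambda$ is zero. Combining the resulting identities then gives:
\begin{itemize}
\item for mean curvature flow, $\int^{(h)}|H|^2+\tau\int|\nabla_{\Gamma_h^m}v|^2=0$;
\item for surface diffusion, $\int|\nabla_{\Gamma_h^m}H|^2+\tau\int|\nabla_{\Gamma_h^m}v|^2=0$.
\end{itemize}
On each connected component, therefore, $v$ is a constant vector and $H$ is a constant scalar (for mean curvature flow, in fact $H=0$). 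I will treat the surface as connected; otherwise the same argument is run componentwise, interpreting (A2) accordingly.

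\emph{Killing $v$ and $\lambda$.} With $\nabla H=0$ (or $H=0$), the second equation reduces to $\int^{(h)}(v\cdot n_h^m)\phi_h=0$ for all $\phi_h$. By the first observation, $v\cdot\hat n_h^m(q)=0$ at every vertex. Since $v$ is constant and the $\hat n_h^m(q)$ span $\mathbb{R}^3$ by (A2), this forces $v=0$.

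Next I test the homogeneous first equation with $w_h=\lambda$. This gives $-\int^{(h)}H\,n_h^m\cdot\lambda=\int|\nabla\lambda|^2$. Because $H$ is constant, the left side equals $-H\int^{(h)}(\lambda\cdot n_h^m)\cdot 1$, which is zero by the fourth equation with $\varphi_h=1$. Thus $\lambda$ is a constant vector $c$. The fourth equation then gives $c\cdot\hat n_h^m(q)=0$ for all vertices $q$, and (A2) yields $c=0$.

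\emph{Killing $H$ and $\kappa$.} With $v=\lambda=0$, the first equation reads $\int^{(h)}H n_h^m\cdot w_h=0$ for all $w_h$. Choosing $w_h$ to be the nodal basis function at a vertex $q$ times $\hat n_h^m(q)$ gives $H(q)|\hat n_h^m(q)|^2=0$, so $H=0$ because $\hat n_h^m(q)\neq0$. The same choice of $\eta_h$ in the third equation (whose left side is now zero) gives $\kappa(q)|\hat n_h^m(q)|^2=0$, hence $\kappa=0$.

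The main obstacle I expect is the energy step. It depends on the exact cancellation of $\int\nabla v\cdot\nabla\lambda$ via the lumped constraint, which requires that the lumped products in the third and fourth equations use the same piecewise normal $n_h^m$. The only other genuine difficulty is surface diffusion, where $H$ is at first known only to be constant; there the order of the argument matters, and one must eliminate $\lambda$ (using $\varphi_h=1$) before concluding $H=0$.
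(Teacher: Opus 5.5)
Your argument is correct, but it uses a different key test function from the paper. The paper tests the second equation with $\phi_h=I_h^m(v\cdot n_h^m)$ instead of $\phi_h=H$.

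For mean curvature flow the $H$-terms then cancel, because $\int_{\Gamma_h^m}^{(h)}H\,I_h^m(v\cdot n_h^m)=\int_{\Gamma_h^m}^{(h)}H\,(v\cdot n_h^m)$. What remains is $\int_{\Gamma_h^m}^{(h)}|I_h^m(v\cdot n_h^m)|^2+\tau\|\nabla_{\Gamma_h^m}v\|_{L^2(\Gamma_h^m)}^2=0$, which gives $v=0$ at once. The paper then obtains:
\begin{itemize}
\item $H=0$ from the second equation;
\item $\lambda=0$ from $w_h=\lambda$ and $\varphi_h=I_h^m(\lambda\cdot n_h^m)$;
\item $\kappa=0$ from $\eta_h=I_h^m(\kappa n_h^m)=I_h^m(\kappa\hat n_h^m)$, which is equivalent to your nodal choice (the basis function at $q$ times $\hat n_h^m(q)$).
\end{itemize}

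The paper asserts that surface diffusion \emph{follows the same argument}. With that test function, however, the terms $\int_{\Gamma_h^m}\nabla_{\Gamma_h^m}H\cdot\nabla_{\Gamma_h^m}I_h^m(v\cdot n_h^m)$ and $-\int_{\Gamma_h^m}^{(h)}H\,n_h^m\cdot v$ no longer cancel. So the paper's argument does not carry over verbatim.

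Your energy test $\phi_h=H$, the same choice as in the energy-stability theorem, treats both schemes uniformly. The price is that for surface diffusion you first obtain only $\nabla_{\Gamma_h^m}H=0$. Your ordering then closes that case: kill $v$ via the second equation, then $\lambda$ using $\varphi_h=1$ and the constancy of $H$, then $H$ and $\kappa$ by nodal testing. This is exactly the detail the paper omits.

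The connectedness caveat you raise is also implicit in the paper, at the step from $\nabla_{\Gamma_h^m}v=0$ to $v$ being a constant vector.
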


\begin{proof}
We consider the scheme \eqref{eq:num-mean curvature flow} for example. It suffices to show that the following homogeneous system admits only the zero solution:
\begin{subequations}\label{eq:homo-mean curvature flow}
\begin{align}
\label{eq:homo-mean curvature flow-v}
\int_{\Gamma_h^{m}}^{(h)} ( v_h^{m+1}\cdot n_h^{m}) \phi_h + \int_{\Gamma_h^{m}}^{(h)}  H_h^{m+1}  \phi_h
&=0, \\
\label{eq:homo-mean curvature flow-H}
-\int_{\Gamma_h^m}^{(h)}  H_h^{m+1} n_h^{m} \cdot w_h + \tau \int_{\Gamma_h^m} \nabla_{\Gamma_h^m}   v_h^{m+1} \cdot \nabla_{\Gamma_h^m} w_h
&= \int_{\Gamma_h^m} \nabla_{\Gamma_h^m} \lambda_h^{m+1} \cdot \nabla_{\Gamma_h^m} w_h,
\\
\label{eq:homo-mean curvature flow-kappa}
\int_{\Gamma_h^m} \nabla_{\Gamma_h^m}  v_h^{m+1} \cdot \nabla_{\Gamma_h^m} \eta_h
&= \int_{\Gamma_h^m}^{(h)} \kappa_h^{m+1} n_h^{m} \cdot \eta_h,
\\
\label{eq:homo-mean curvature flow-lambda}
\int_{\Gamma_h^m}^{(h)} \lambda_h^{m+1} \cdot n_h^{m} \varphi_h
&= 0,
\end{align}
\end{subequations}
holds for all $(w_h, \phi_h, \eta_h, \varphi_h) \in S_h(\Gamma_h^{m})^3 \times S_h(\Gamma_h^{m})  \times S_h(\Gamma_h^{m})^3 \times S_h(\Gamma_h^{m}) $.

By choosing the test functions \(\phi_h = I_h^m( v_h^{m+1} \cdot n_h^m)\), \(w_h =  v_h^{m+1}\), \(\eta_h = \lambda_h^{m+1}\), and \(\varphi_h = \kappa_h^{m+1}\), and summing up the resulting equations, we obtain
\[
\int_{\Gamma_h^m}^{(h)} I_h^m( v_h^{m+1} \cdot n_h^{m}) I_h^m( v_h^{m+1} \cdot n_h^{m})
+ \tau\int_{\Gamma_h^m} \nabla_{\Gamma_h^m}  v_h^{m+1} \cdot \nabla_{\Gamma_h^m}  v_h^{m+1}
= 0 .
\]
This implies
\[
\| \nabla_{\Gamma_h^m}  v_h^{m+1} \|_{L^2(\Gamma_h^m)} = 0
\quad \text{and} \quad
I_h^m( v_h^{m+1} \cdot n_h^{m}) = 0.
\]
By the nondegeneracy condition (A1), the first identity implies that
\( v_h^{m+1} = v_{\mathrm{const}}\) is a constant vector field. Combined with the second identity, this leads to
\[
0 = I_h^m( v_h^{m+1} \cdot n_h^{m})
= v_{\mathrm{const}} \cdot I_h^m(n_h^m)
= v_{\mathrm{const}} \cdot \hat{n}_h^m.
\]
Hence, by condition (A2), we conclude that \( v_h^{m+1} = v_{\mathrm{const}} = 0\).

Substituting \( v_h^{m+1} = 0\) into \eqref{eq:homo-mean curvature flow-v} immediately yields \( H_h^{m+1} = 0\). Next, taking \(w_h = \lambda_h^{m+1}\) in \eqref{eq:homo-mean curvature flow-H} and \(\varphi_h = I_h^m(n_h^m \cdot \lambda_h^{m+1})\) in \eqref{eq:homo-mean curvature flow-lambda}, we obtain
\[
\| \nabla_{\Gamma_h^m} \lambda_h^{m+1} \|_{L^2(\Gamma_h^m)} = 0
\quad \text{and} \quad
I_h^m(\lambda_h^{m+1} \cdot n_h^{m}) = 0.
\]
By the same argument as above, we conclude that \(\lambda_h^{m+1} = 0\).

Finally, taking \(\eta_h = I_h^m(\kappa_h^{m+1} n_h^m)\) in \eqref{eq:homo-mean curvature flow-kappa}, we have
\[
0 = \int_{\Gamma_h^m}^{(h)} \kappa_h^{m+1} n_h^{m} \cdot I_h^m(\kappa_h^{m+1} n_h^m)
= \int_{\Gamma_h^m}^{(h)} \bigl|I_h^m(\kappa_h^{m+1} n_h^m)\bigr|^2
= \int_{\Gamma_h^m}^{(h)} \bigl|I_h^m(\kappa_h^{m+1} \hat{n}_h^m)\bigr|^2,
\]
where, in the last step, we use \(I_h^m(\kappa_h^{m+1} n_h^m) = I_h^m(\kappa_h^{m+1} \hat{n}_h^m)\), which can be verified by comparing their nodal values via \eqref{eq:mass-lumped-int}. This implies that \(\kappa_h^{m+1} = 0\), since condition (A2) ensures that \(\hat{n}_h^m\) is nonzero at each vertex of \(\Gamma_h^m\).

The proof for scheme \eqref{eq:num-surface diffusion} follows the same argument as that for scheme \eqref{eq:num-mean curvature flow} and is therefore omitted.
\hfill\end{proof}

The following result shows that the proposed schemes \eqref{eq:num-mean curvature flow} and \eqref{eq:num-surface diffusion} are unconditionally energy stable.

\begin{theorem}\label{thm:area-decreasing-mass-mcf}
For the numerical solutions determined by \eqref{eq:num-mean curvature flow} and \eqref{eq:num-surface diffusion}, the discrete surface area is monotonically non-increasing in time, i.e., 
\[
|\Gamma_h^{m+1}| \le |\Gamma_h^{m}| .
\]
\end{theorem}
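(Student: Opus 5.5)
The plan is to test each equation with a carefully chosen function so that, after summing, only terms with a definite sign survive, and then to invoke the geometric inequality \eqref{area-ineq}. For the mean curvature flow scheme \eqref{eq:num-mean curvature flow}, take $w_h=\tau v_h^{m+1}$ in \eqref{eq:num-mean curvature flow-H}, $\phi_h=\tau H_h^{m+1}$ in \eqref{eq:num-mean curvature flow-v}, $\eta_h=\tau\lambda_h^{m+1}$ in \eqref{eq:num-mean curvature flow-kappa} and $\varphi_h=\tau\kappa_h^{m+1}$ in \eqref{eq:num-mean curvature flow-lambda}.

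The main bookkeeping is checking that the cross terms cancel. The mass-lumped term $-\tau\int^{(h)} H_h^{m+1} n_h^m\cdot v_h^{m+1}$ from \eqref{eq:num-mean curvature flow-H} equals $-\tau\int^{(h)}(v_h^{m+1}\cdot n_h^m)H_h^{m+1}$. By \eqref{eq:num-mean curvature flow-v} with $\phi_h=H_h^{m+1}$, this is $+\tau\int^{(h)}|H_h^{m+1}|^2$. The right-hand side of \eqref{eq:num-mean curvature flow-H} is $\tau\int\nabla\lambda_h^{m+1}\cdot\nabla v_h^{m+1}$. By symmetry and \eqref{eq:num-mean curvature flow-kappa} with $\eta_h=\lambda_h^{m+1}$, it equals $\tau\int^{(h)}\kappa_h^{m+1}n_h^m\cdot\lambda_h^{m+1}$, which vanishes by \eqref{eq:num-mean curvature flow-lambda} with $\varphi_h=\kappa_h^{m+1}$. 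This is exactly why the dual multiplier was introduced: the tangential-motion equation and the constraint on $\lambda$ annihilate each other. We are left with
\[
\int_{\Gamma_h^m}\nabla_{\Gamma_h^m}X_h^{m+1}\cdot\nabla_{\Gamma_h^m}(\tau v_h^{m+1})=-\tau\int_{\Gamma_h^m}^{(h)}|H_h^{m+1}|^2\le 0 .
\]
Since $X_h^{m+1}(\Gamma_h^m)=\Gamma_h^{m+1}$ with $X_h^{m+1}=\mathrm{id}+\tau v_h^{m+1}$, inequality \eqref{area-ineq} then gives $|\Gamma_h^{m+1}|-|\Gamma_h^m|\le 0$.

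For surface diffusion the argument is identical, except that \eqref{eq:num-surface diffusion-v} with $\phi_h=H_h^{m+1}$ gives $\int^{(h)}(v_h^{m+1}\cdot n_h^m)H_h^{m+1}=-\int|\nabla_{\Gamma_h^m}H_h^{m+1}|^2$. The right-hand side becomes $-\tau\int|\nabla H_h^{m+1}|^2\le0$.

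The only delicate point is to check that the mass-lumped products pair consistently, so that the same discrete quantity appears in both equations. In \eqref{eq:num-mean curvature flow-H} the term $\int^{(h)}H_h n_h\cdot w_h$ is evaluated elementwise at vertices, and so is $\int^{(h)}(v_h\cdot n_h)\phi_h$; with $w_h=v_h$ and $\phi_h=H_h$ these integrands coincide pointwise, so the two terms match exactly. The same holds for the $\kappa$–$\lambda$ pairing, so no interpolation $I_h^m$ is needed. I expect no real obstacle beyond relying on \eqref{area-ineq}, which is assumed from the cited literature.
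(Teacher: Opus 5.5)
Your proposal is correct and is the paper's argument. The paper also tests with $(v_h^{m+1},H_h^{m+1},\lambda_h^{m+1},\kappa_h^{m+1})$ (your extra factor $\tau$ is immaterial), uses the cancellation of the $\lambda$--$\kappa$ terms, and concludes with \eqref{area-ineq}. Your explicit check that the mass-lumped integrands agree vertexwise is a detail the paper leaves implicit.
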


\begin{proof}
Choosing $(w_h, \phi_h, \eta_h, \varphi_h)=(v_h^{m+1},H_h^{m+1}, \lambda_h^{m+1},\kappa_h^{m+1})$ in \eqref{eq:num-mean curvature flow} and summing the resulting identities yields
\[
\int_{\Gamma_h^m}^{(h)} \bigl|H_h^{m+1}\bigr|^2
\;+\;
\int_{\Gamma_h^m} 
\nabla_{\Gamma_h^m}\bigl(\mathrm{id}+\tau v_h^{m+1}\bigr)
\cdot \nabla_{\Gamma_h^m} v_h^{m+1}
= 0.
\]
For \eqref{eq:num-surface diffusion}, the same choice of test functions gives
\[
\int_{\Gamma_h^m} \bigl|\nabla_{\Gamma_h^m} H_h^{m+1}\bigr|^2
\;+\;
\int_{\Gamma_h^m} 
\nabla_{\Gamma_h^m}\bigl(\mathrm{id}+\tau v_h^{m+1}\bigr)
\cdot\nabla_{\Gamma_h^m} v_h^{m+1}
= 0.
\]
Energy stability for both schemes follows from \(\mathrm{id}+\tau v_h^{m+1}=X_h^{m+1}\) and \eqref{area-ineq}.
\hfill\end{proof}

\subsection{Dual formulation for other artificial tangential motions}
\label{section:tangential}

The dual formulation is not restricted to the MDR choice; rather, it provides a general framework for constructing linearly implicit and energy-stable schemes for curvature flows equipped with other tangential motions. To this end, let \(a_{\Gamma}(\cdot,\cdot)\) be a symmetric bilinear form on vector fields over \(\Gamma\), which is positive definite when restricted to the tangential subspace. For example,
\begin{align}\label{sym-MDR}
a_{\Gamma}(v,v)
=\int_{\Gamma} \frac12\bigl|\nabla_{\Gamma}v\bigr|^2 
\quad\mbox{or}\quad
a_{\Gamma}(v,v)
=\int_{\Gamma} \frac12\bigl|\nabla_{\Gamma}v+(\nabla_{\Gamma}v)^{\mathrm T}\bigr|^2 .
\end{align}
One may then define a tangential velocity by minimizing the {\it generalized MDR energy} \(a_{\Gamma}(v,v)\) subject to the prescribed normal-velocity constraint. The corresponding dual formulation of mean curvature flow (in weak form) reads as follows.
\medskip

\noindent\emph{Generalized dual-MDR formulation of mean curvature flow:}
Find \((v,H,\lambda,\kappa)\in H^1(\Gamma)^3\times H^1(\Gamma)\times H^1(\Gamma)^3\times H^1(\Gamma)\) such that
\begin{subequations}\label{eq:sym-MCF}
\begin{align}
\label{eq:sym-mean curvature flow-H}
\hspace*{-1.2em}\int_{\Gamma} -Hn\cdot w
+\int_{\Gamma} \nabla_{\Gamma}\mathrm{id}\cdot\nabla_{\Gamma} w
&= a_{\Gamma}(\lambda,w),
&& \forall\, w\in H^1(\Gamma)^3, \\
\label{eq:sym-mean curvature flow-v}
\hspace*{-1.2em}\int_{\Gamma} (v\cdot n)\,\phi
+\int_{\Gamma} H\,\phi
&= 0,
&& \forall\, \phi\in H^1(\Gamma), \\
\label{eq:sym-mean curvature flow-kappa}
\hspace*{-1.2em}a_{\Gamma}(v,\eta)
&= \int_{\Gamma} \kappa n\cdot \eta,
&& \forall\, \eta\in H^1(\Gamma)^3, \\
\label{eq:sym-mean curvature flow-lambda}
\hspace*{-1.2em}\int_{\Gamma} (\lambda\cdot n)\,\varphi
&= 0,
&& \forall\, \varphi\in H^1(\Gamma).
\end{align}
\end{subequations}
A discretization of \eqref{eq:sym-MCF}, analogous to \eqref{eq:num-mean curvature flow}, yields a linearly implicit and energy-stable scheme for the tangential motion associated with \(a_{\Gamma}(\cdot,\cdot)\).

The corresponding dual formulation and discretization for surface diffusion with the tangential motion associated with \(a_{\Gamma}(\cdot,\cdot)\) are analogous and are therefore omitted.

\subsection{Numerical experiments for closed-surface evolution}\label{sec:numerical}

In this section we present numerical experiments to demonstrate the performance of the proposed dual-MDR schemes for closed-surface evolution in mean curvature flow and surface diffusion. We report errors and convergence rates for the proposed methods, and assess their effectiveness in improving mesh quality by comparing them with the MDR and BGN methods. To quantify the mesh quality of a polyhedral surface, we use the indicator
\[
\sigma_{\max} = \max_{K \subset \Gamma_h} \frac{h(K)}{r(K)},
\]
where $h(K)$ is the diameter of the circumcircle and $r(K)$ is the diameter of the largest inscribed circle of a triangle $K \subset \Gamma_h$.

\begin{example}[Convergence of the method for mean curvature flow]
\label{ex:mean curvature flow-rate}
\upshape
We test the accuracy and convergence rate of the scheme in \eqref{eq:num-mean curvature flow} for mean curvature flow by taking the unit sphere as the initial surface. For this setting, the analytical solution is known for $t<0.25$, and the evolving surface remains spherical with radius
\(
r(t)=\sqrt{1-4t}.
\)
At the final time $t=T$, the error of the computed surface obtained with time-step size $\tau$ and $N_p$ vertices is quantified by
\[
\mathrm{Error}(\tau, N_p)=\max_{j=1,\ldots,N_p}\Bigl|\;|x_j(T)|-r(T)\Bigr|,
\]
where $x_j(T) \in \mathbb{R}^3$ represents a vertex of the triangulated surface $\Gamma_h^N$, with $N = T/\tau$.
To test convergence with respect to the temporal discretization, we use a highly refined triangulation of the initial surface with \(N_p=5238\) vertices and \(N_T=10472\) triangles, so that spatial discretization errors are negligible. 
To test convergence with respect to the spatial discretization, we fix a very small time-step size \(\tau=10^{-5}\) to suppress temporal discretization errors. The resulting errors and observed convergence rates at \(T=0.1\) are reported in Fig.~\ref{fig:mean curvature flow-conv-space} and Fig.~\ref{fig:mean curvature flow-conv-time}. These results indicate that the temporal and spatial discretization errors are $\mathcal{O}(\tau)$ and $\mathcal{O}(N_p^{-1})$, respectively. 



  \begin{figure}[htbp]
\centering
\captionsetup[subfigure]{skip=2pt}

\begin{subfigure}[t]{0.42\textwidth}
  \centering
  \includegraphics[width=\linewidth]{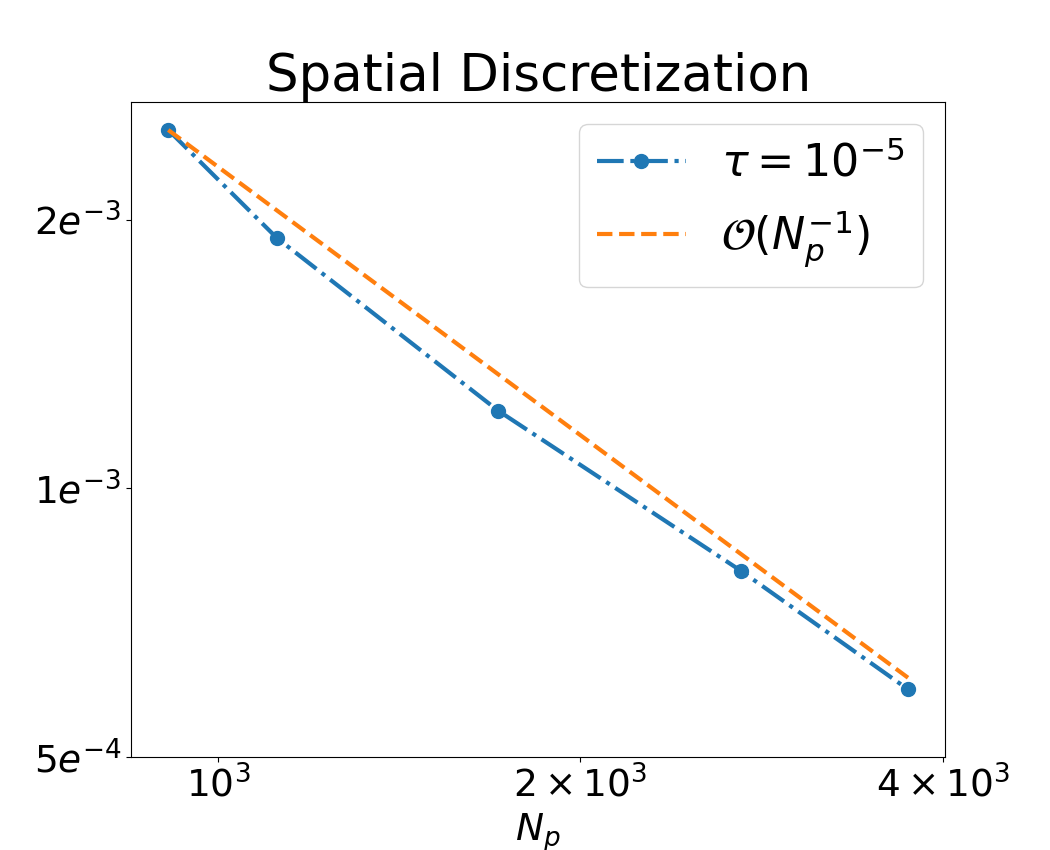}
  \caption{Error of spatial discretization}
  \label{fig:mean curvature flow-conv-space}
\end{subfigure}\qquad
\begin{subfigure}[t]{0.42\textwidth}
  \centering
  \includegraphics[width=\linewidth]{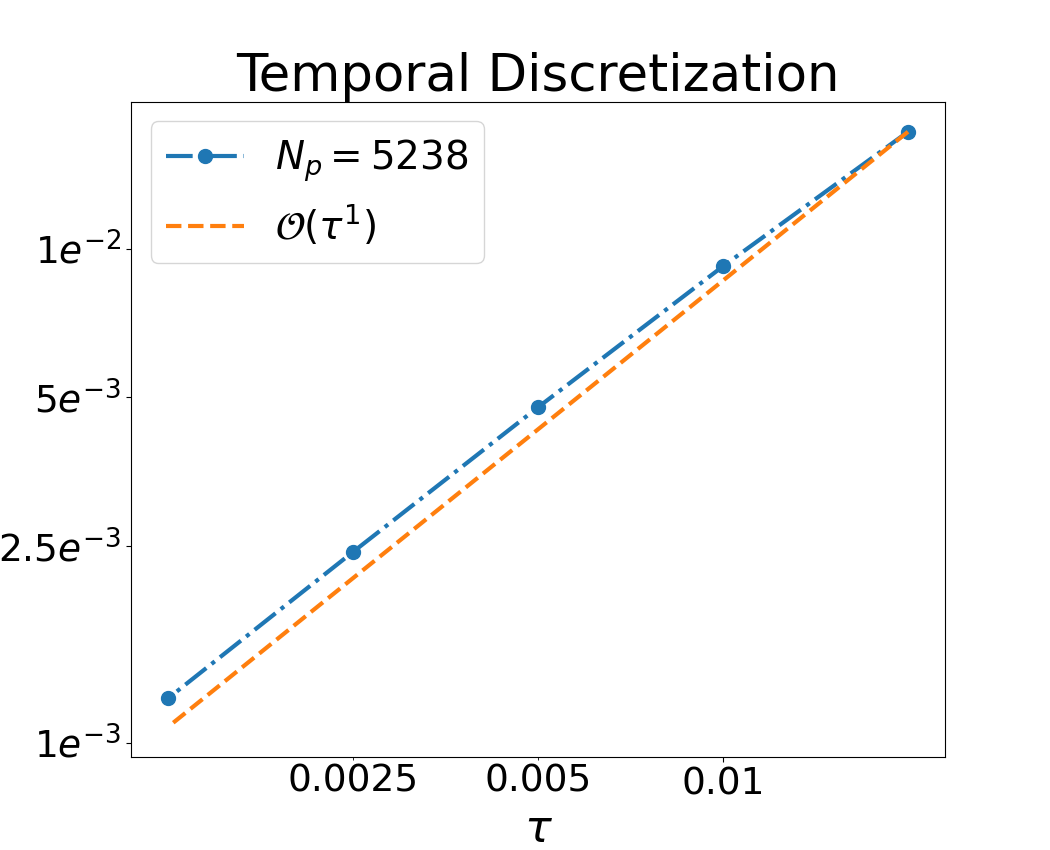}
  \caption{Error of temporal discretization}
  \label{fig:mean curvature flow-conv-time}
\end{subfigure}

\caption{Example~\ref{ex:mean curvature flow-rate}: convergence rates in space and time for scheme~\eqref{eq:num-mean curvature flow}.}
\label{fig:mean curvature flow-conv-space-time}
\end{figure}

\end{example}

\begin{example}[Mean curvature flow for dumbbell-shaped surface]\label{ex:mean curvature flow-dumbbell}\upshape
We consider a benchmark example of mean curvature flow with a dumbbell-shaped initial surface given by the following parameterization:
\begin{equation}
x=\left(\begin{array}{c}
\cos \varphi \\
\big(0.6 \cos ^2 \varphi+0.4\big) \cos \theta \sin \varphi \\
\big(0.6 \cos ^2 \varphi+0.4\big) \sin \theta \sin \varphi
\end{array}\right), \quad \theta \in[0,2 \pi), \quad \varphi \in[0, \pi].
\end{equation}

\begin{figure}[htbp!]
    \centering
    \begin{subfigure}[b]{0.5\textwidth}
        \centering
        \includegraphics[width=0.6\textwidth]{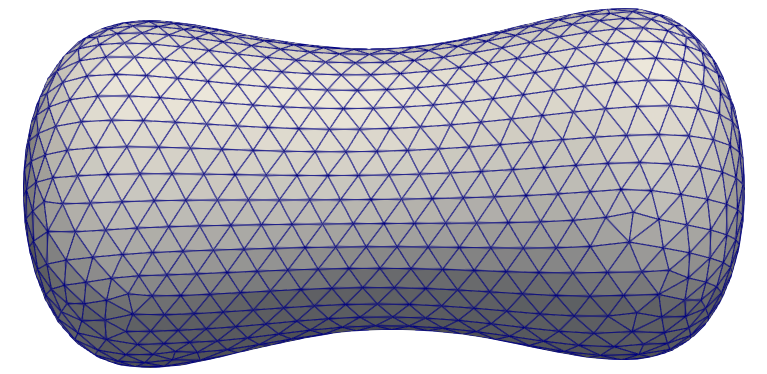}
        \caption{Initial surface}
    \end{subfigure}

    \vspace{-5pt}

    \begin{subfigure}[b]{0.3\textwidth}
        \centering
        \includegraphics[width=0.6\textwidth]{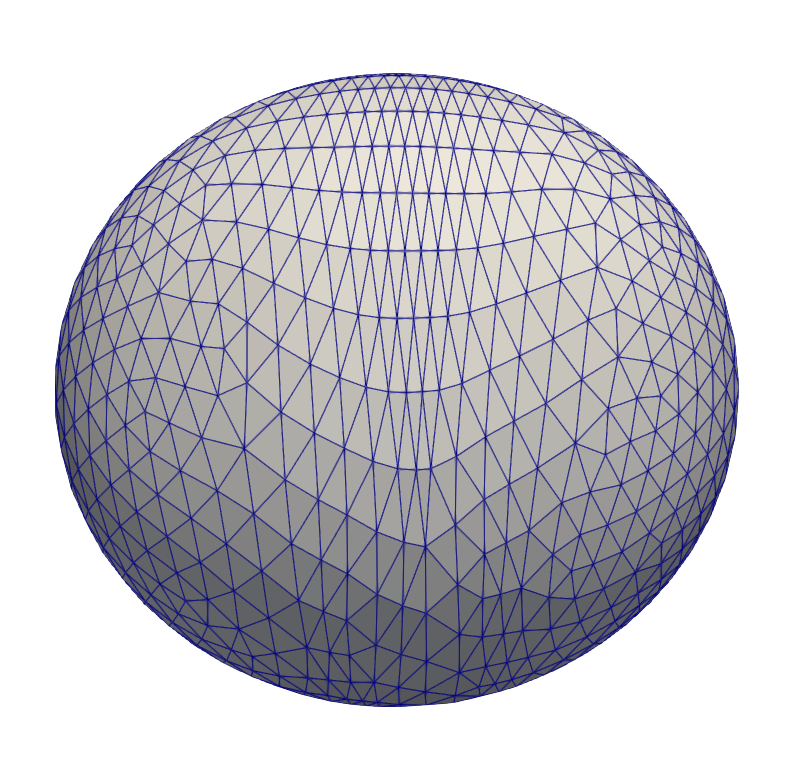}
        \caption{BGN with $\tau = 1 \times 10^{-4}$ at $t = 0.0911$}
        \label{fig:dum-BGN2}
    \end{subfigure}
\begin{subfigure}[b]{0.3\textwidth}
    \centering
        \includegraphics[width=0.63\textwidth]{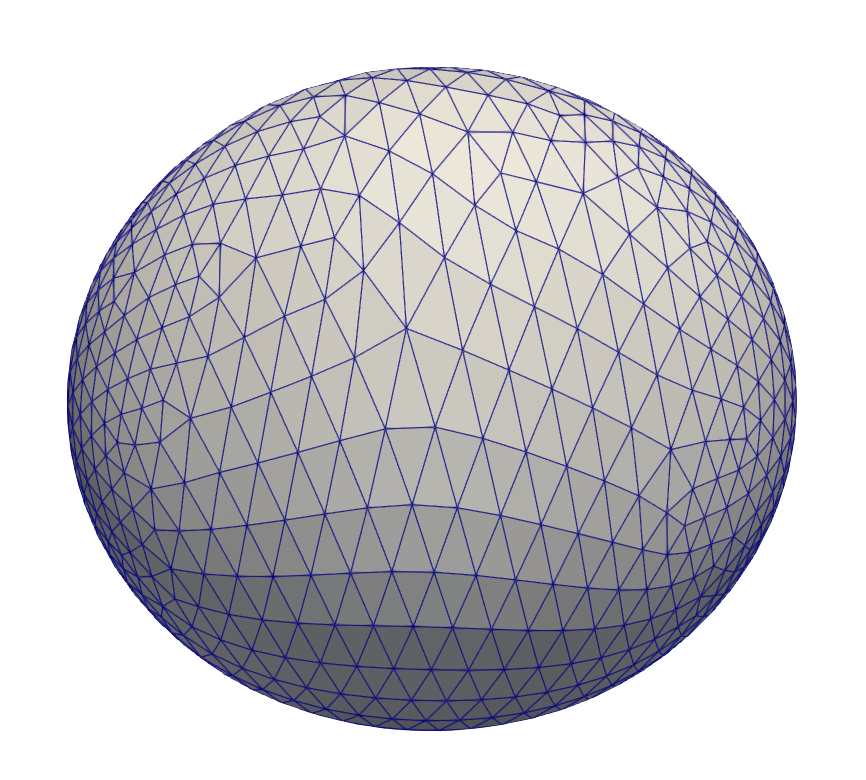}
    \caption{MDR with $\tau = 1 \times 10^{-4}$ at $t = 0.0913$}
    \label{fig:dum-MDR2}
\end{subfigure}
 \begin{subfigure}[b]{0.3\textwidth}
        \centering
        \includegraphics[width=0.6\textwidth]{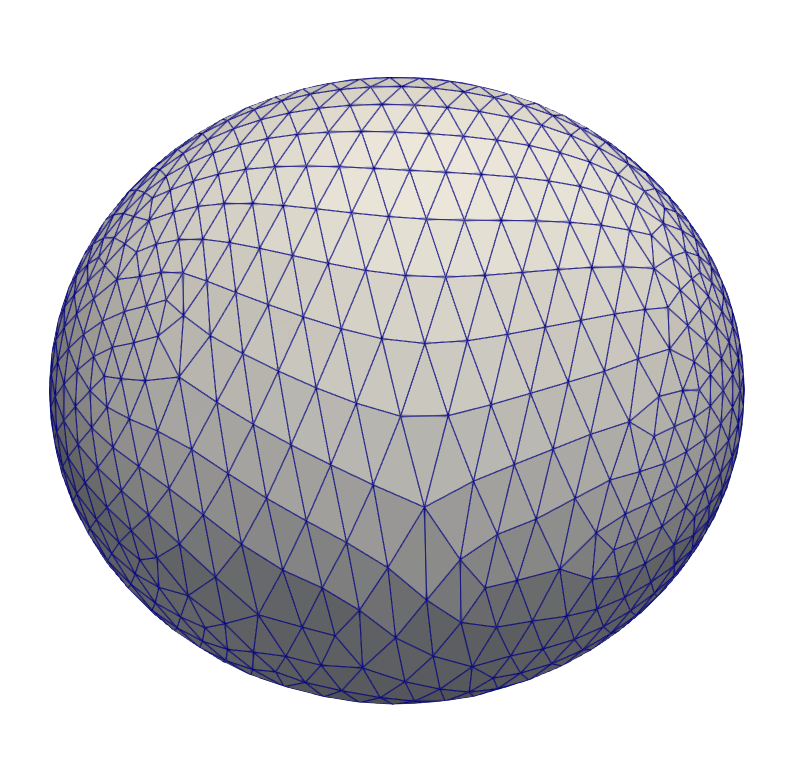}
        \caption{Dual-MDR with $\tau = 1 \times 10^{-4}$ at $t = 0.0911$}
        \label{fig:dum-MDR-dual2}
    \end{subfigure}

    \vspace{-5pt}
    \begin{subfigure}[b]{0.3\textwidth}
        \centering
        \includegraphics[width=0.6\textwidth]{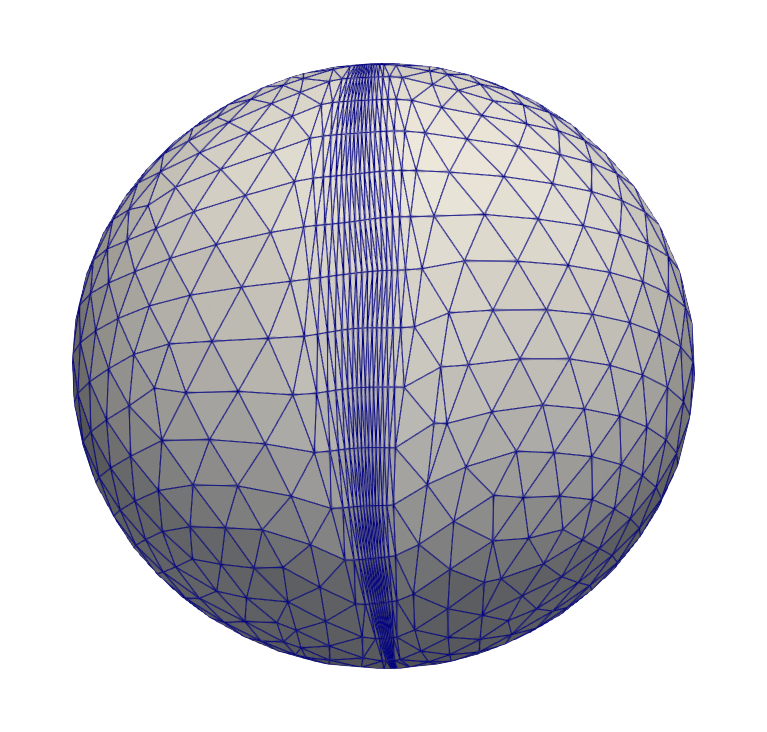}
        \caption{BGN with $\tau = 2.5 \times 10^{-5}$ at $t = 0.090625$}
        \label{fig:dum-BGN}
    \end{subfigure}
\begin{subfigure}[b]{0.3\textwidth}
    \centering
        \includegraphics[width=0.63\textwidth]{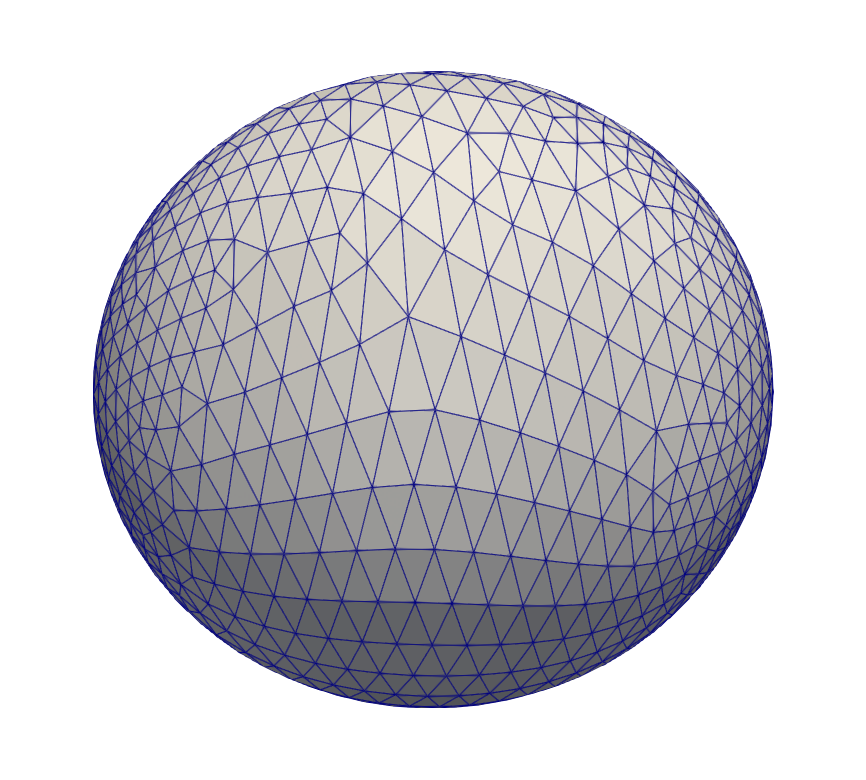}
    \caption{MDR with $\tau = 2.5 \times 10^{-5}$ at $t = 0.090775$}
    \label{fig:dum-MDR}
\end{subfigure}
 \begin{subfigure}[b]{0.3\textwidth}
        \centering
        \includegraphics[width=0.6\textwidth]{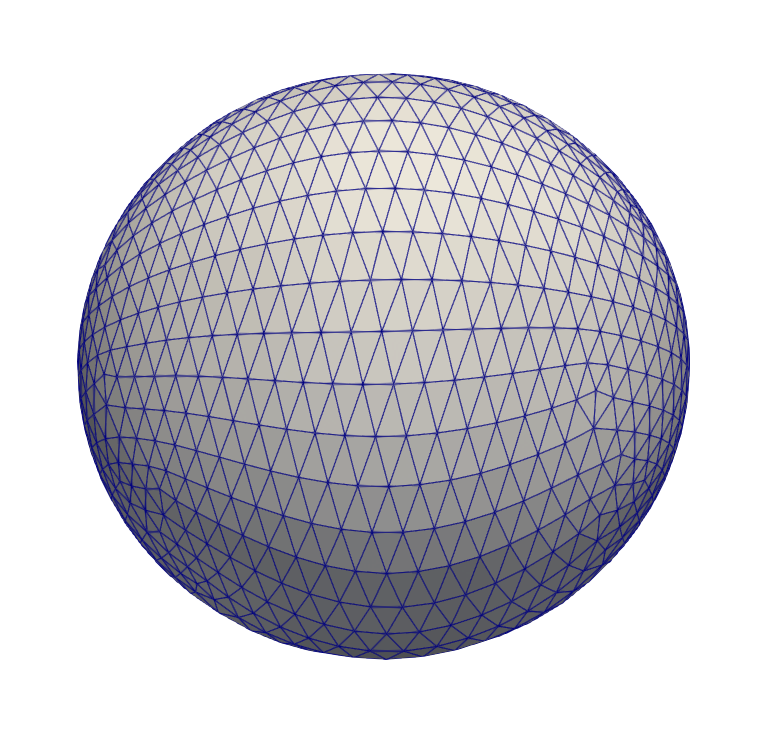}
        \caption{Dual-MDR with $\tau = 2.5 \times 10^{-5}$ at $t = 0.090575$}
        \label{fig:dum-MDR-dual}
    \end{subfigure}

\caption{Comparison of surface evolution under mean curvature flow in Example~\ref{ex:mean curvature flow-dumbbell}.}
\label{fig:dumPic}
\end{figure}
It has been shown in \cite{elliott2017approximations} that this dumbbell-shaped surface evolves toward a spherical shape under mean curvature flow, while its surface area decreases by several orders of magnitude before the surface ultimately shrinks to a point. Such a dramatic reduction in area makes this example particularly challenging from the numerical point of view, since Dziuk’s method, as well as other approaches that do not incorporate artificial tangential motion, may suffer from mesh distortion and inaccurate geometric evolution, thereby preventing the surface from approaching the correct spherical shape.

In our numerical experiments, the initial surface is triangulated into 2152 triangles and 1078 vertices. We compare the BGN scheme, the MDR scheme, and our dual-MDR scheme \eqref{eq:num-mean curvature flow} for different time-step sizes in Fig.~\ref{fig:dumPic}. All three schemes approach the correct spherical shape as the solution develops singularities. However, when a small time-step size is used, the BGN method exhibits noticeable mesh distortion (see Fig.~\ref{fig:dum-BGN}), whereas our method performs robustly across different time-step sizes. Furthermore, Fig.~\ref{fig:dum-MQ} compares the mesh quality of the different methods for $\tau = 1\times 10^{-5}$ and confirms these observations. Finally, Fig.~\ref{fig:dum-Area} and Fig.~\ref{fig:dum-dArea} illustrate the energy stability of our method, showing that it preserves the energy-dissipation structure.




\begin{figure}[htbp]
\centering
\captionsetup[subfigure]{skip=2pt} 

\begin{subfigure}[t]{0.333\textwidth}
  \centering
  \includegraphics[width=\linewidth]{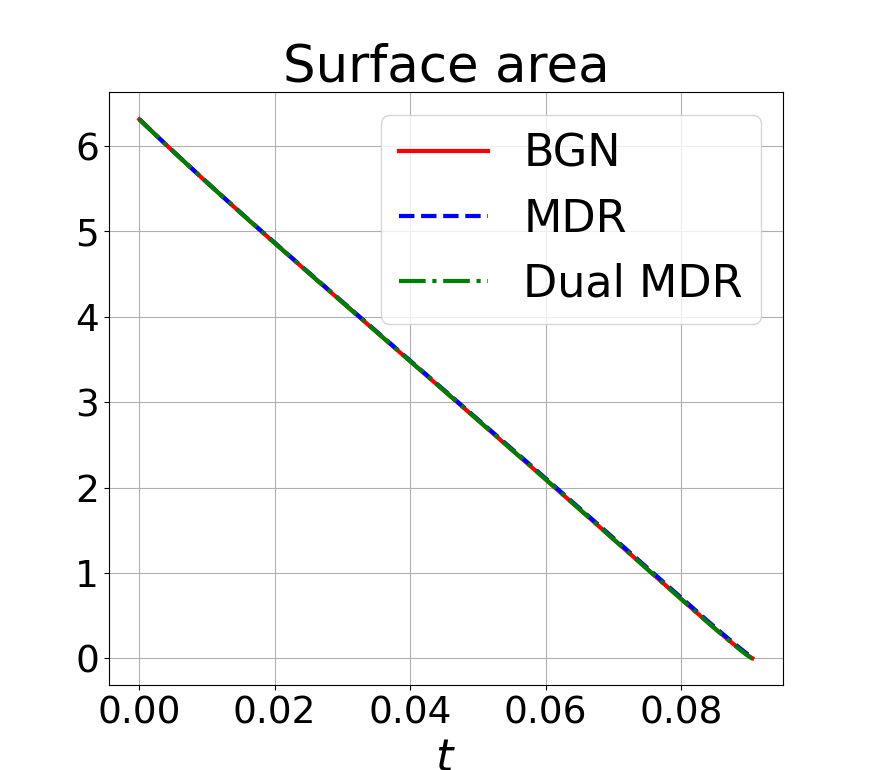}
  \caption{Surface area: $|\Gamma_h^{m}|$}
  \label{fig:dum-Area}
\end{subfigure}\hfill
\begin{subfigure}[t]{0.333\textwidth}
  \centering
  \includegraphics[width=\linewidth]{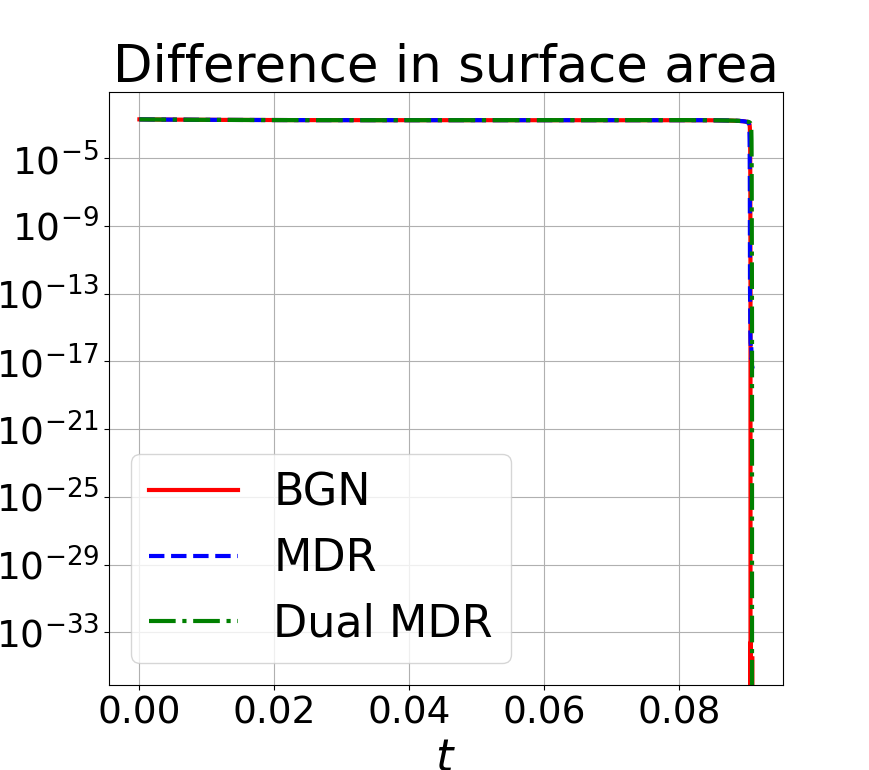}
  \caption{$|\Gamma_h^{m}| - |\Gamma_h^{m+1}|$}
  \label{fig:dum-dArea}
\end{subfigure}\hfill
\begin{subfigure}[t]{0.333\textwidth}
  \centering
  \includegraphics[width=\linewidth]{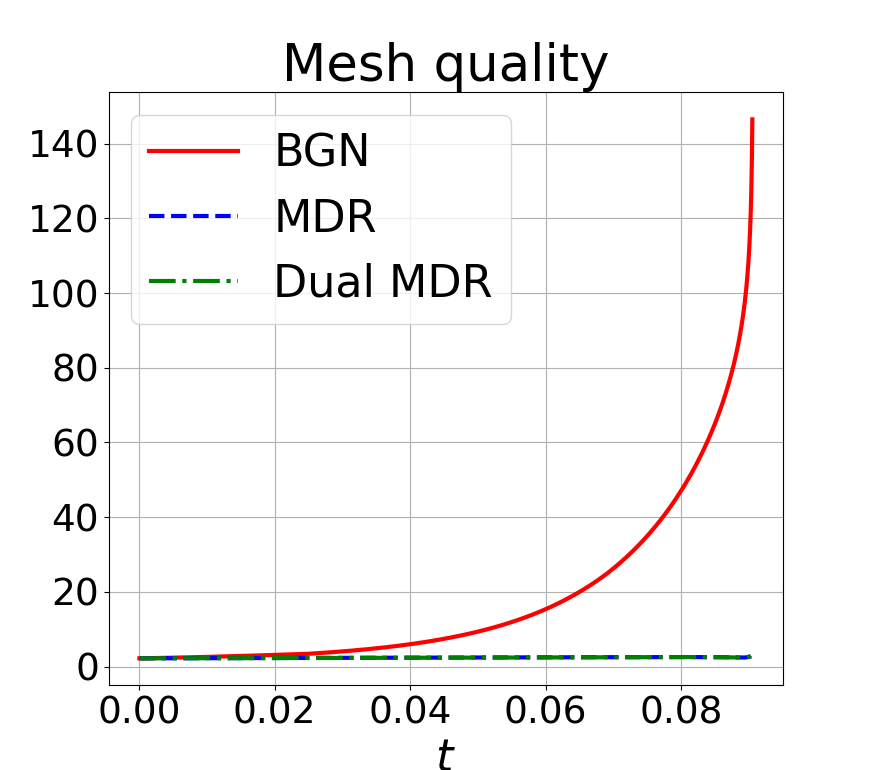}
  \caption{Mesh quality}
  \label{fig:dum-MQ}
\end{subfigure}

\caption{Comparison of the surface-area decay and mesh quality in Example~\ref{ex:mean curvature flow-dumbbell}.}
\label{fig:dum-MQ-Area}
\end{figure}

\end{example}

\begin{example}[Surface diffusion of a 1:1:8 cuboid]\label{ex:surface diffusion-Box118}
  \upshape
In this example, we study the surface diffusion of a $1\!:\!1\!:\!8$ cuboid that develops singularities. The initial surface is nonsmooth, with sharp edges and corners. Under surface diffusion, these sharp features gradually smooth and round off, after which the midsection progressively thins, eventually leading to pinch-off and separation into two components. The absence of tangential motion may cause triangles near edges to cross and fold, resulting in severe mesh distortion. Moreover, the formation of singularities poses significant challenges for numerical simulation, and without energy stability, the singular dynamics may not be captured accurately.

Figure~\ref{fig:Box118-Pic} compares the numerical results produced by the BGN method, the MDR scheme, and the dual-MDR scheme \eqref{eq:num-surface diffusion}. 
We observe that the MDR scheme becomes unstable near pinch-off due to the lack of energy stability (see Fig.~\ref{fig:Box118-MDR} and Fig.~\ref{fig:Box118-MDR2}). 
In contrast, both the BGN method and the dual-MDR scheme capture the pinch-off singularity. 

More specifically, the BGN method resolves the pinch-off event sharply (see Fig.~\ref{fig:Box118-BGN} and Fig.~\ref{fig:Box118_BGN2}), producing a discrete surface that nearly splits into two components connected only by a very thin neck. The dual-MDR scheme also captures pinch-off correctly and, moreover, maintains good mesh quality for both large and small time-step sizes (see Fig.~\ref{fig:Box118-MDR-dual} and Fig.~\ref{fig:Box118-MDR-dual2}). 
By comparison, the BGN method exhibits noticeable mesh distortion for the smaller time-step size (see Fig.~\ref{fig:Box118_BGN2}).

\begin{figure}[htbp]
    \centering
    \begin{subfigure}[b]{0.6\textwidth}
        \centering
        \includegraphics[width=0.6\textwidth]{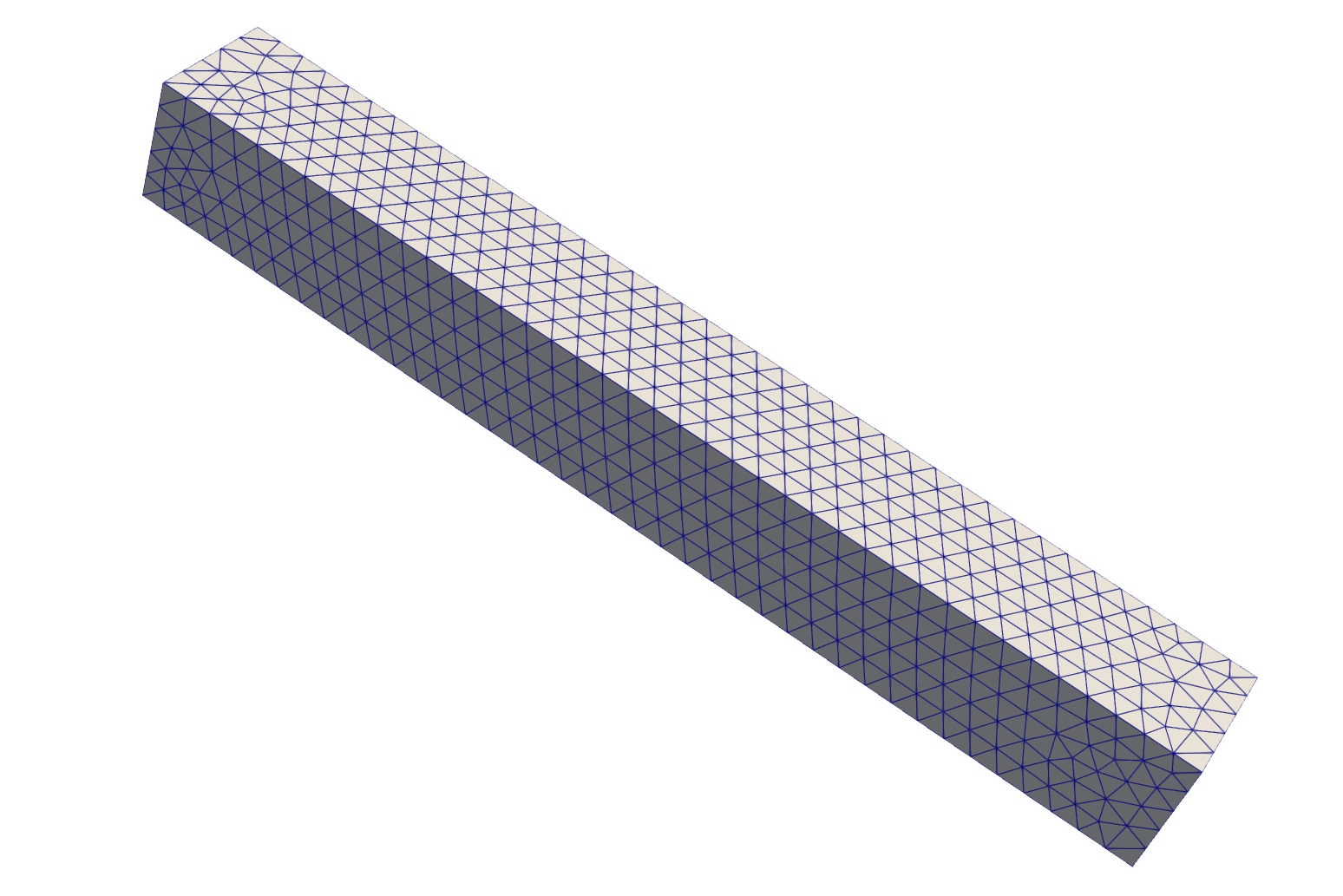}
        \caption{Initial surface}
    \end{subfigure}

    \vspace{-1pt}

    \begin{subfigure}[b]{0.3\textwidth}
        \centering
        \includegraphics[width=0.95\textwidth]{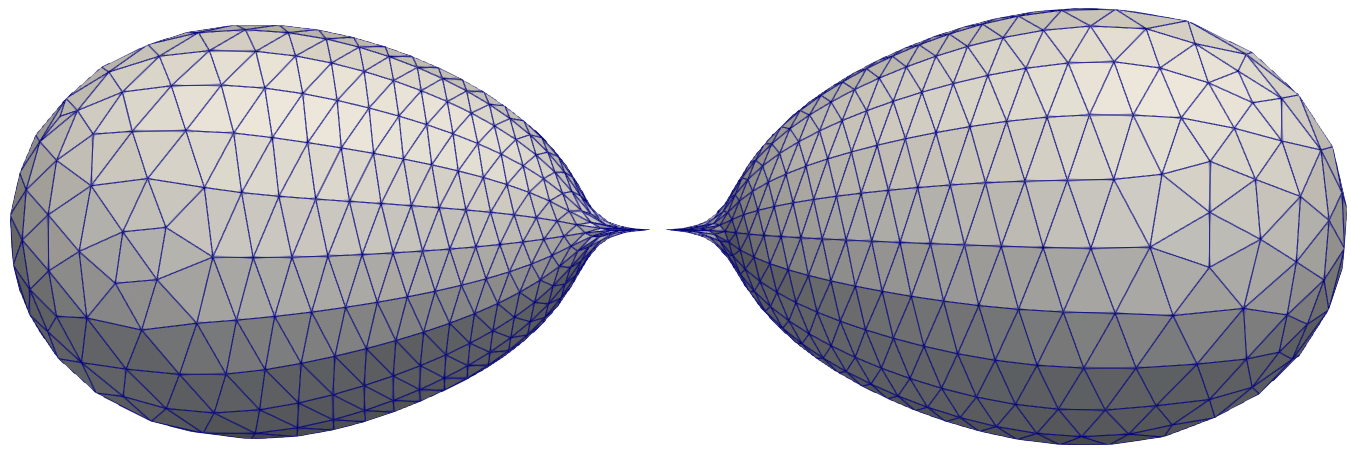}\vspace{5pt}
        \caption{BGN with $\tau = 10^{-3}$\newline\centering at $t = 0.366$}
        \label{fig:Box118-BGN}  
    \end{subfigure}
\begin{subfigure}[b]{0.3\textwidth}
    \centering
\includegraphics[width=0.95\textwidth]{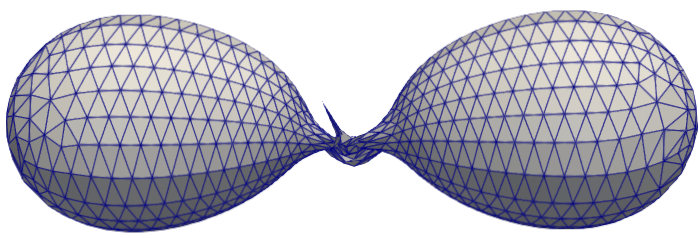}\vspace{5pt}
    \caption{MDR with $\tau = 10^{-3}$\newline\centering at $t = 0.366 $}
    \label{fig:Box118-MDR}
\end{subfigure}
 \begin{subfigure}[b]{0.3\textwidth}
        \centering
        \includegraphics[width=0.95\textwidth]{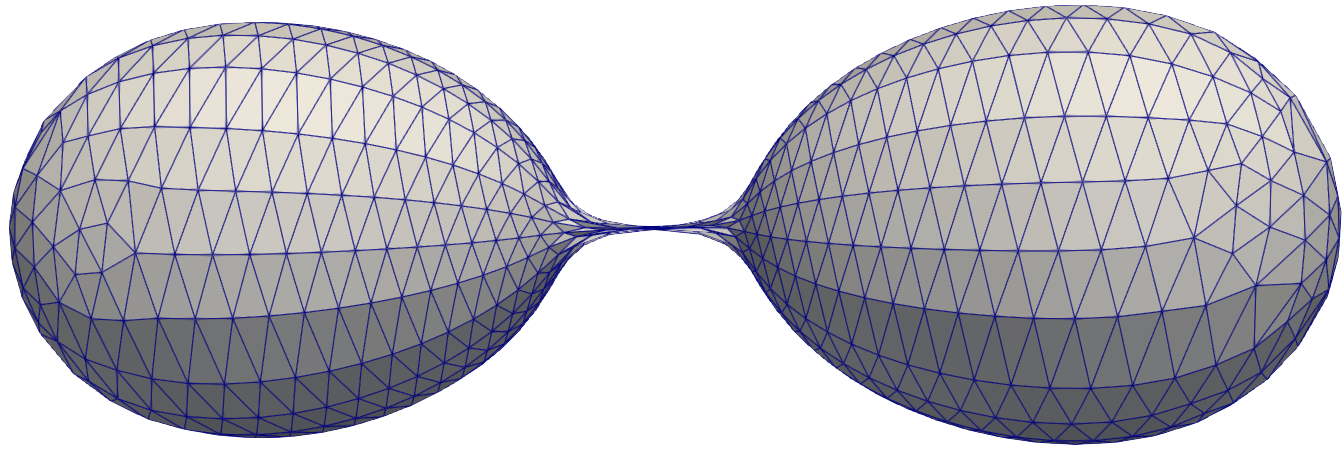}\vspace{5pt}
        \caption{Dual-MDR with\newline\centering $\tau = 10^{-3}$ at $t = 0.366$ }
        \label{fig:Box118-MDR-dual}
    \end{subfigure}

    \vspace{-1pt}
    
    \begin{subfigure}[b]{0.3\textwidth}
        \centering
        \includegraphics[width=0.95\textwidth]{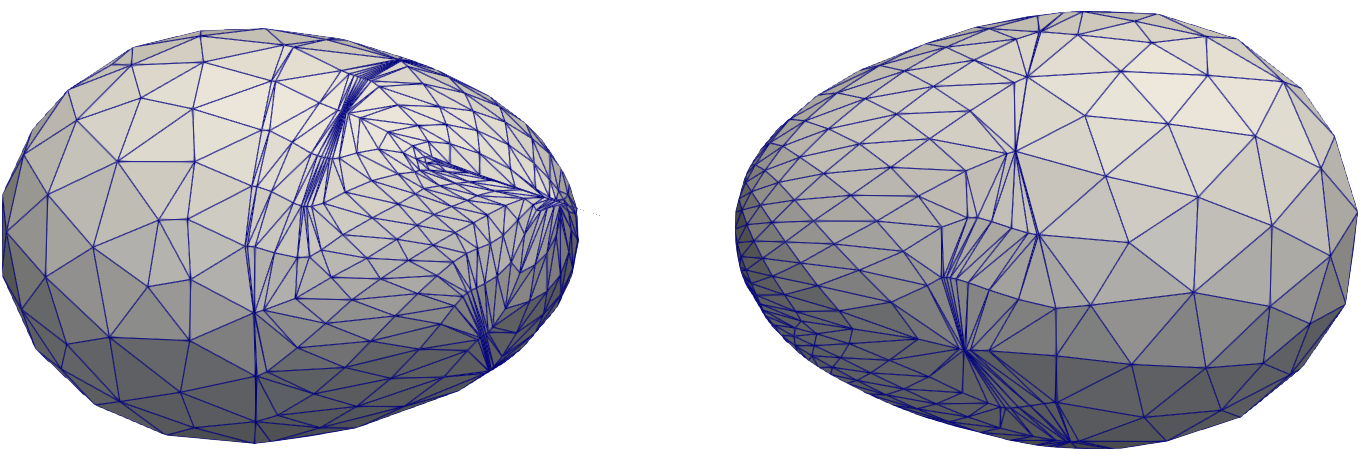}\vspace{5pt}
        \caption{BGN with $\tau = 10^{-4}$\newline\centering at $t = 0.366$}
         \label{fig:Box118_BGN2}
    \end{subfigure}
\begin{subfigure}[b]{0.3\textwidth}
    \centering
    \includegraphics[width=0.95\textwidth]{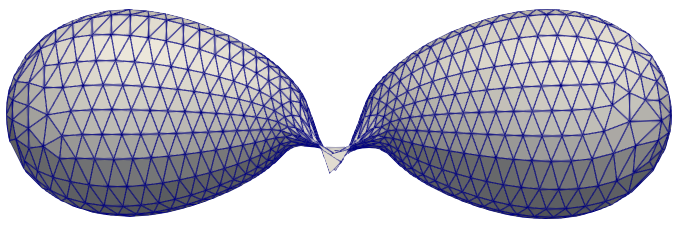}\vspace{5pt}
    \caption{MDR with $\tau = 10^{-4}$\newline\centering at $t = 0.3644$}
    \label{fig:Box118-MDR2}
\end{subfigure}
 \begin{subfigure}[b]{0.3\textwidth}
        \centering
        \includegraphics[width=0.95\textwidth]{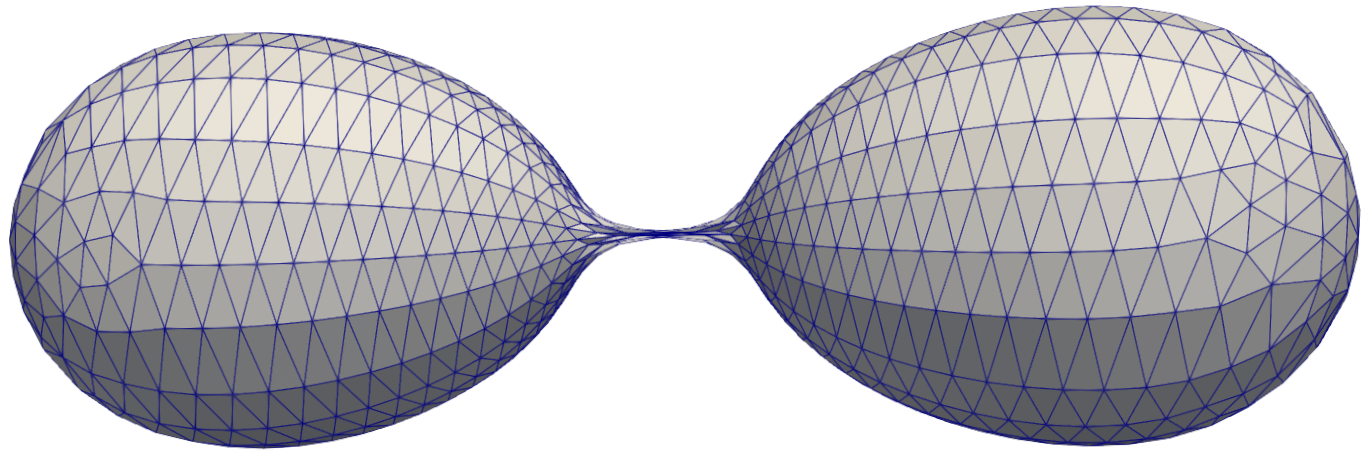}\vspace{5pt}
        \caption{Dual-MDR with\newline\centering $\tau = 10^{-4}$ at $t = 0.366$}
        \label{fig:Box118-MDR-dual2}
    \end{subfigure}

\caption{Comparison of surface evolution under surface diffusion in Example \ref{ex:surface diffusion-Box118}.}
\label{fig:Box118-Pic}
\end{figure}

In addition, Fig.~\ref{fig:Box118-Area} and Fig.~\ref{fig:Box118-dA} indicate that the dual-MDR scheme is unconditionally energy stable: the area decreases monotonically up to the pinch-off singularity. Finally, Fig.~\ref{fig:Box118MQ} further highlights the advantage of the dual-MDR scheme in preserving mesh quality up to the time of pinch-off singularity for $\tau=10^{-4}$.

\begin{figure}[htbp]
\centering
\captionsetup[subfigure]{skip=2pt} 

\begin{subfigure}[t]{0.333\textwidth}
  \centering
  \includegraphics[width=\linewidth]{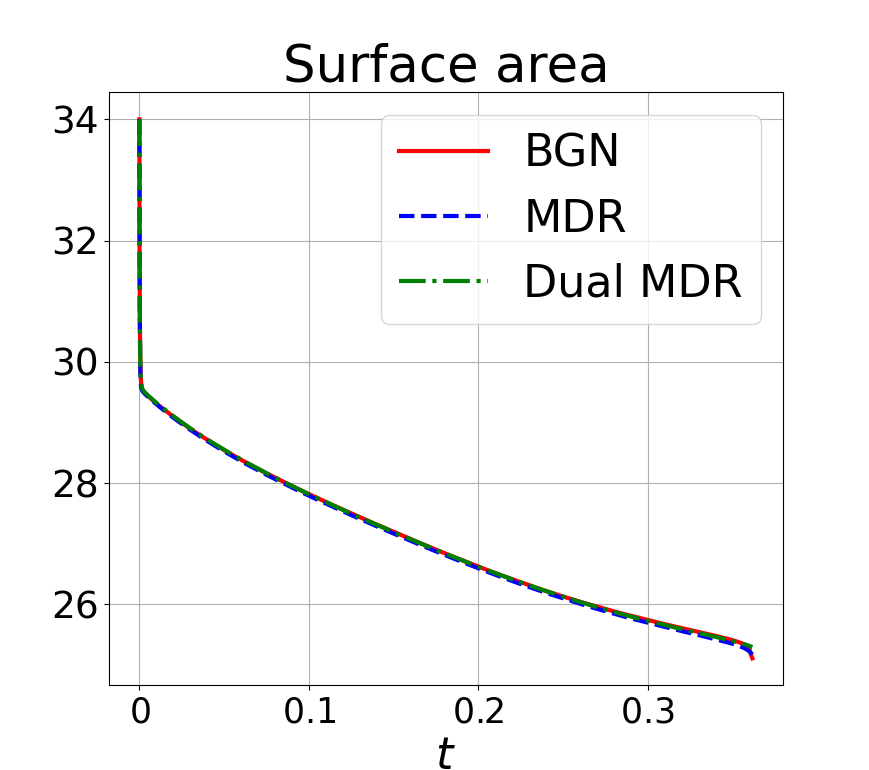}
  \caption{Surface area: $|\Gamma_h^{m}|$}
  \label{fig:Box118-Area}
\end{subfigure}\hfill
\begin{subfigure}[t]{0.333\textwidth}
  \centering
  \includegraphics[width=\linewidth]{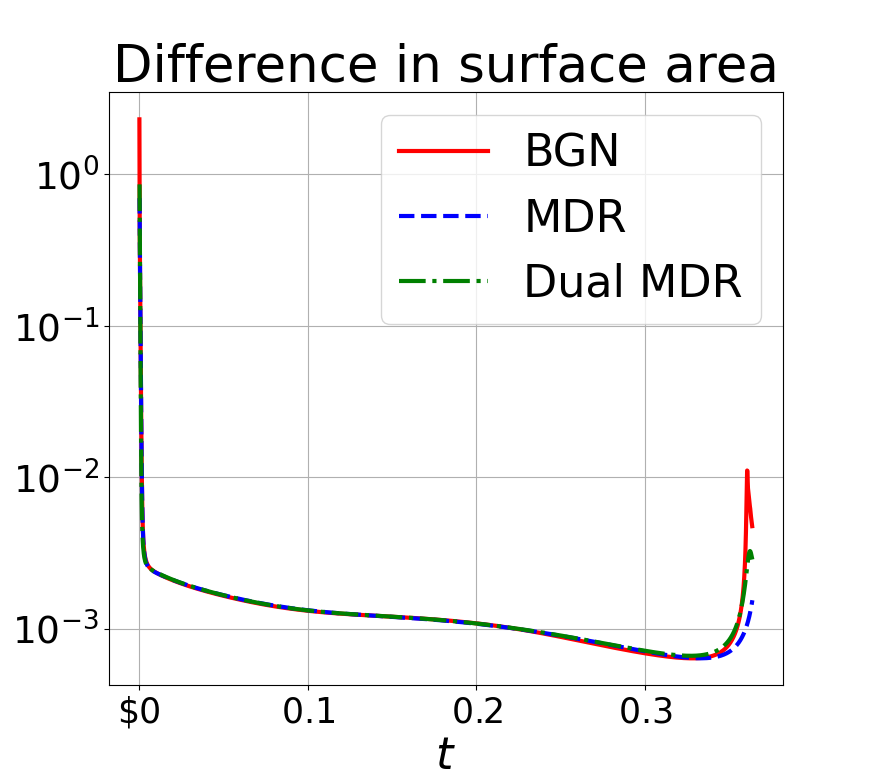}
  \caption{$|\Gamma_h^{m}| - |\Gamma_h^{m+1}|$}
  \label{fig:Box118-dA}
\end{subfigure}\hfill
\begin{subfigure}[t]{0.333\textwidth}
  \centering
  \includegraphics[width=\linewidth]{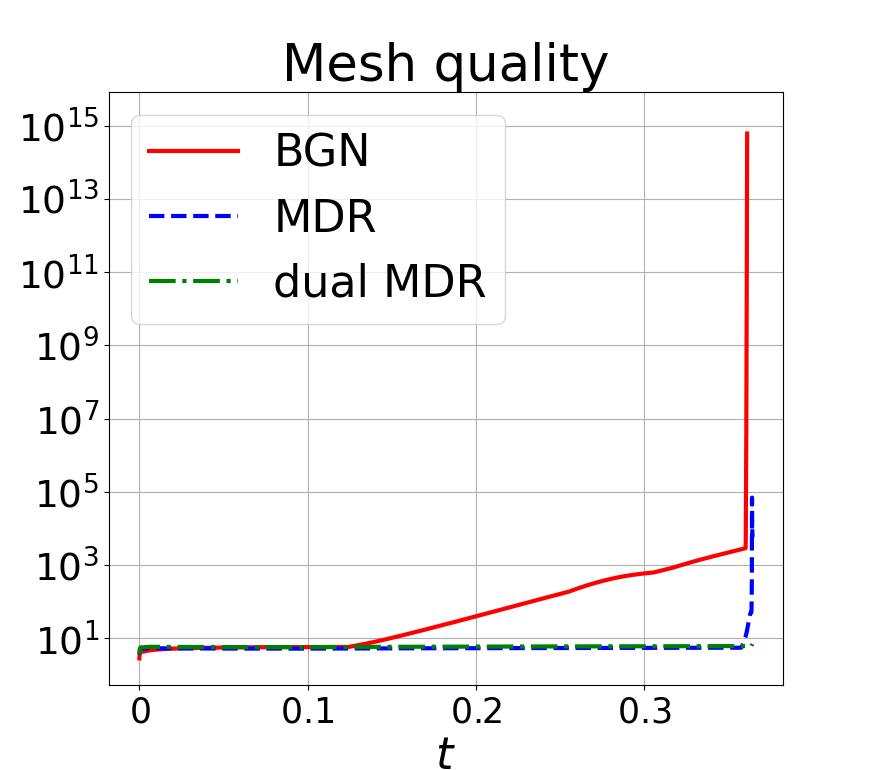}
  \caption{Mesh quality}
  \label{fig:Box118MQ}
\end{subfigure}
\caption{Comparison of the surface-area decay and mesh quality in Example~\ref{ex:surface diffusion-Box118}.}
\label{fig:Box118-AreaMQ}
\end{figure}

\end{example}

\section{Dual formulation for solid-state dewetting with a moving contact line}

The dual formulation approach extends naturally to open surfaces with a moving contact line, as encountered in solid-state dewetting on a substrate. 

%
%
Following the geometric framework in \cite{Bao2021,Bao2023}, we consider a thin-film surface $\Gamma(t)\subset\mathbb{R}^3\cap\{x=(x_1,x_2,x_3):x_3\ge 0\}$ intersecting the planar substrate $\{x=(x_1,x_2,x_3):x_3=0\}$ at the closed contact line:  
$$
\partial\Gamma(t)=\Gamma(t)\cap\{x\in\mathbb{R}^3:x_3=0\} .
$$
The evolution of $\Gamma=\Gamma(t)$ in solid-state dewetting is governed by the surface-diffusion law
\begin{align}\label{solid-state dewetting-normal-velocity}
v \cdot n = \Delta_{\Gamma} H
\quad \text{on } \Gamma
\end{align}
and supplemented by boundary conditions on $\partial\Gamma$: 
\begin{subequations}
\begin{align}
X_3(\cdot,t)\big|_{\partial\Gamma} &= 0
&&\text{(contact line stays on the substrate)}, \label{X_3} \\
\mu_{\scriptscriptstyle\partial} \cdot n_{\scriptscriptstyle\partial} &= \cos\theta &&\mbox{(Young's law for the contact angle)},\label{young} \\
\bigl(\mu_{\scriptscriptstyle\partial} \cdot \nabla_{\Gamma} H\bigr)\big|_{\partial\Gamma} &= 0 &&\mbox{(zero flux condition)},\label{non-flux}
\end{align}
\end{subequations}
where $\theta$ is a given parameter determined by the material of the thin film and the substrate, $\mu_{\scriptscriptstyle\partial}$ is the conormal vector, i.e., tangent to $\Gamma$ and normal to $\partial\Gamma$ (pointing downward), and $n_{\scriptscriptstyle\partial}$ is the unit vector normal to $\partial\Gamma$ and within the substrate plane.

Let \(w \in H^{1}(\Gamma)^{3}\) be an arbitrary test function. Testing \eqref{eq:id-H} with \(w\) and applying the surface
divergence theorem, we obtain
\begin{align*}
\int_{\Gamma} Hn \cdot w
&= \int_{\Gamma} \nabla_{\Gamma}\mathrm{id} \cdot \nabla_{\Gamma} w
   - \int_{\partial\Gamma} \mu_{\scriptscriptstyle\partial}\cdot w .
\end{align*}
To incorporate the contact-angle condition, we decompose \(\mu_{\scriptscriptstyle\partial}\) into its components in the
\(\{n_{\scriptscriptstyle\partial},e_{3}\}\)-directions:
\[
\mu_{\scriptscriptstyle\partial}
= (\mu_{\scriptscriptstyle\partial}\cdot n_{\scriptscriptstyle\partial})\,n_{\scriptscriptstyle\partial}
+ (\mu_{\scriptscriptstyle\partial}\cdot e_{3})\,e_{3},
\quad\mbox{where}\,\,\, e_{3}=(0,0,1).
\]
Using Young's law \eqref{young} and the identity \(\mu_{\scriptscriptstyle\partial}\cdot e_{3}=-\sin\theta\), we infer that
\begin{align*}
\int_{\partial\Gamma} \mu_{\scriptscriptstyle\partial}\cdot w
&= \int_{\partial\Gamma} \cos\theta\, (n_{\scriptscriptstyle\partial}\cdot w)
   - \int_{\partial\Gamma} \sin\theta\, (e_{3}\cdot w).
\end{align*}
Substituting this identity into the previous relation, we arrive at
\begin{align}\label{foundation}
\int_{\Gamma} Hn\cdot w
= \int_{\Gamma} \nabla_{\Gamma} {\rm id} \cdot \nabla_{\Gamma}w
  - \int_{\partial \Gamma} \cos\theta \,n_{\scriptscriptstyle\partial} \cdot w
  + \int_{\partial \Gamma} \sin\theta \,e_3 \cdot w,
\end{align}
which forms the basis of the variational formulation for the solid-state dewetting problem. 

Since the geometric evolution law \eqref{solid-state dewetting-normal-velocity} for solid-state dewetting prescribes only the normal component of the velocity, the tangential motion remains undetermined. In the next subsection we formulate the solid-state dewetting problem with the MDR tangential motion.

\subsection{MDR and dual-MDR formulations}

We note that \eqref{foundation} contains both information of mean curvature and contact angle. In particular, choosing $w=\psi n$ in \eqref{foundation} yields the following {\it weak formulation for determining the mean curvature $H$ and the conormal vector $\mu_\partial$ simultaneously}:
\begin{equation}\label{weakform:H-SD}
\int_{\Gamma} -H \psi + \int_{\Gamma} \nabla_{\Gamma} {\rm id} \cdot \nabla_{\Gamma} (\psi n)
= \int_{\partial \Gamma} (\cos \theta \,n_{\scriptscriptstyle\partial} - \sin \theta\, e_3) \cdot n \,\psi.
\end{equation}
Indeed, if $H$ is any scalar function satisfying \eqref{weakform:H-SD} then choosing $\psi\in C^\infty_0(\Gamma)$ and using integration by parts yields $H=-\Delta_{\Gamma}{\rm id}\cdot n$ (thus $H$ must be the mean curvature). Then, choosing $\psi\in C^\infty(\Gamma)$ and using integration by parts yields 
\begin{equation}
0 = \int_{\partial \Gamma} (\cos \theta \,n_{\scriptscriptstyle\partial} - \sin \theta\, e_3) \cdot n \,\psi \quad\forall\,\psi\in C^\infty(\Gamma).
\end{equation}
This shows that $\cos \theta \,n_{\scriptscriptstyle\partial} - \sin \theta\, e_3$ is orthogonal to $n$. Since $\cos \theta \,n_{\scriptscriptstyle\partial} - \sin \theta\, e_3$ is also orthogonal to the tangent vector of $\partial\Gamma$, it follows that $\cos \theta \,n_{\scriptscriptstyle\partial} - \sin \theta\, e_3=\mu_{\scriptscriptstyle\partial}$ (the sign is uniquely determined as $\mu_{\scriptscriptstyle\partial}$ is pointing downward). 

The argument above shows that \eqref{weakform:H-SD} determines $H$ as the mean curvature and simultaneously imposes Young's law \eqref{young}. Moreover, the constraint \eqref{X_3} requires the velocity to be in the space $
\mathbf{X}(\Gamma) := H^1(\Gamma) \times H^1(\Gamma) \times H^1_0(\Gamma) $. Therefore, combining \eqref{weakform:H-SD} with \eqref{solid-state dewetting-normal-velocity} and the zero flux condition \eqref{non-flux}, the continuous MDR formulation for solid-state dewetting can be written as finding \((v,H,\kappa) \in \mathbf{X}(\Gamma) \times H^1(\Gamma) \times H^1(\Gamma)\) such that
\begin{subequations}\label{eq:solid-state dewetting-cont}
\begin{align}
\label{eq:solid-state dewetting-H-cont}
\int_{\Gamma} -H \psi + \int_{\Gamma} \nabla_{\Gamma} {\rm id} \cdot \nabla_{\Gamma} (n\psi) &= \int_{\partial \Gamma} (\cos \theta \,n_{\scriptscriptstyle\partial} - \sin \theta\, e_3)\cdot n \,\psi, \\
\label{eq:solid-state dewetting-v-cont}
\int_{\Gamma} (v\cdot n) \phi + \int_{\Gamma} \nabla_{\Gamma} H \cdot \nabla_{\Gamma} \phi &= 0, \\
\label{eq:solid-state dewetting-kappa-cont}
  a_{\Gamma}(v,\eta) &= \int_{\Gamma} \kappa n \cdot \eta,
\end{align}
\end{subequations}
for all \((\psi,\phi,\eta) \in H^1(\Gamma) \times H^1(\Gamma) \times \mathbf{X}(\Gamma)\), where $a_{\Gamma}(\cdot,\cdot)$ is a bilinear form for defining a tangential motion, as shown in \eqref{sym-MDR}.

The MDR formulation \eqref{eq:solid-state dewetting-cont} can determine the open-surface evolution in solid-state dewetting with the MDR tangential motion for mesh-quality control. However, its direct discretization does not guarantee the desired energy stability. This can again be achieved by considering the corresponding dual-MDR formulation. 

In view of the dual-MDR formulation of closed-surface evolution in surface diffusion, we can directly write down the dual-MDR formulation for the solid-state dewetting problem as follows (in the weak form): 
%
Find \((v,H,\lambda,\kappa) \in \mathbf{X}(\Gamma) \times H^1(\Gamma) \times \mathbf{X}(\Gamma) \times H^1(\Gamma)\) such that
\begin{subequations}\label{eq:solid-state dewetting}
\begin{align}
\label{eq:solid-state dewetting-H}
&\int_{\Gamma} -H n \cdot w + \int_{\Gamma} \nabla_{\Gamma} {\rm id} \cdot \nabla_{\Gamma} w - \int_{\partial \Gamma} \cos\theta \,n_{\scriptscriptstyle\partial} \cdot w
= a_{\Gamma}(\lambda, w) ,\\
\label{eq:solid-state dewetting-v}
&\int_{\Gamma} (v\cdot n)\,\phi + \int_{\Gamma} \nabla_{\Gamma} H \cdot \nabla_{\Gamma} \phi
= 0,\\
\label{eq:solid-state dewetting-kappa}
&  a_{\Gamma}(v,\eta)
= \int_{\Gamma} \kappa n \cdot \eta,\\
\label{eq:solid-state dewetting-lambda}
&\int_{\Gamma} \lambda \cdot n \,\varphi
= 0,
\end{align}
\end{subequations}
for all \((w,\phi,\eta,\varphi) \in \mathbf{X}(\Gamma) \times H^1(\Gamma) \times \mathbf{X}(\Gamma) \times H^1(\Gamma)\). Here, the dual multiplier \(\lambda\equiv 0\) is introduced exactly as in the closed-surface case, with its exact solution being zero, and \eqref{eq:solid-state dewetting-H} is obtained from \eqref{foundation} by using the property \(w\cdot e_3 = 0\) on \(\partial\Gamma\) for \(w \in \mathbf{X}(\Gamma)\). The equivalence between \eqref{eq:solid-state dewetting} and \eqref{eq:solid-state dewetting-cont} can be shown similarly as in the closed-surface case.

\subsection{Linearly implicit dual-MDR scheme for solid-state dewetting}

Let $\mathbf{X}_h(\Gamma_h^{m}) := S_h(\Gamma_h^{m}) \times S_h(\Gamma_h^{m}) \times \mathring{S}_h(\Gamma_h^{m})$, with \(\mathring{S}_h(\Gamma_h^{m}) \) being the space of finite element functions vanishing on \(\partial\Gamma_h^m\). Then the  discretization of the dual-MDR formulation \eqref{eq:solid-state dewetting} leads to the dual-MDR scheme for the solid-state dewetting problem:  
%
Find \(( v_h^{m+1},  H_h^{m+1}, \lambda_h^{m+1}, \kappa_h^{m+1}) \in \mathbf{X}_h(\Gamma_h^{m}) \times S_h(\Gamma_h^{m}) \times \mathbf{X}_h(\Gamma_h^{m}) \times S_h(\Gamma_h^{m})\) such that
\begin{subequations}\label{eq:num-solid-state dewetting}
\begin{align}
\label{eq:num-solid-state dewetting-H}
& \int_{\Gamma_h^{m}}^{(h)}  - H_h^{m+1} n_h^{m} \cdot w_h + \int_{\Gamma_h^{m}} \nabla_{\Gamma_h^{m}}({\rm id} + \tau  v_h^{m+1}) \cdot \nabla_{\Gamma_h^{m}} w_h  - \int_{\partial \Gamma_h^{m}}\cos\theta \,n_{\scriptscriptstyle\partial}^{m+\frac{1}{2}} \cdot w_h\notag\\
&= a_{\Gamma_h^{m}}(\lambda_h^{m+1}, w_h), \\
\label{eq:num-solid-state dewetting-v}
& \int_{\Gamma_h^{m}}^{(h)} ( v_h^{m+1}\cdot n_h^{m}) \phi_h + \int_{\Gamma_h^{m}}\nabla_{\Gamma_h^{m}}  H_h^{m+1} \cdot \nabla_{\Gamma_h^{m}} \phi_h
=0, \\
\label{eq:num-solid-state dewetting-kappa}
& a_{\Gamma_h^{m}}(v_h^{m+1},\eta_h) 
= \int_{\Gamma_h^{m}}^{(h)}  \kappa_h^{m+1} n_h^{m} \cdot \eta_h, \\
 \label{eq:num-solid-state dewetting-lambda}
&\int_{\Gamma_h^m}^{(h)}  \lambda_h^{m+1} \cdot n_h^{m} \varphi_h = 0,
\end{align}
\end{subequations}
for all \((w_h,\phi_h,\eta_h,\varphi_h) \in \mathbf{X}_h(\Gamma_h^{m}) \times S_h(\Gamma_h^{m}) \times \mathbf{X}_h(\Gamma_h^{m}) \times S_h(\Gamma_h^{m})\).
%
Here, we define the discrete conormal vector \(n_{\scriptscriptstyle\partial}^{m+\frac12}\) by midpoint averaging of the boundary tangents as in \cite[Eq.~(3.7)]{Bao2023} and \cite{jiang2021perimeter}: if \(s\) denotes the arc-length parameter along the discrete boundary curve, then
\begin{align}\label{n-partial-def}
n_{\scriptscriptstyle\partial}^{m+\frac12}
:= \frac{1}{2}\bigl(\partial_s {\rm id} + \partial_s X_h^{m+1}\bigr)\times e_3.
\end{align}

The following theorem shows that the scheme \eqref{eq:num-solid-state dewetting} is unconditionally energy stable, and hence reflects the underlying physical dissipation mechanism.

\begin{theorem}\label{thm:area-decreasing-mass-sd-3D}
The numerical solution determined by the dual-MDR scheme \eqref{eq:num-solid-state dewetting} has monotonically non-increasing energy, i.e., 
\begin{align}\label{thm:area-decreasing-mass-sd-1-3D}
|\Gamma_h^{m+1}| - \cos \theta \,|S_1^{m+1}| \le |\Gamma_h^m| - \cos \theta \,|S_1^m| ,
\end{align}
where \(S_1^m\) denotes the planar domain enclosed by the contact line \(\partial\Gamma_h^m\) on the substrate.
\end{theorem}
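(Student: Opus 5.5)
The plan follows the closed-surface argument of Theorem~\ref{thm:area-decreasing-mass-mcf}, with an additional boundary term that must match the change of the substrate area $|S_1^m|$. We test \eqref{eq:num-solid-state dewetting} with $(w_h,\phi_h,\eta_h,\varphi_h)=(v_h^{m+1},H_h^{m+1},\lambda_h^{m+1},\kappa_h^{m+1})$; all of these lie in the admissible spaces, since $v_h^{m+1},\lambda_h^{m+1}\in\mathbf{X}_h(\Gamma_h^m)$. By symmetry of $a_{\Gamma_h^m}$, the terms $a_{\Gamma_h^m}(\lambda_h^{m+1},v_h^{m+1})$ and $a_{\Gamma_h^m}(v_h^{m+1},\lambda_h^{m+1})$ cancel. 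The term $\int^{(h)}\kappa_h^{m+1}n_h^m\cdot\lambda_h^{m+1}$ cancels against \eqref{eq:num-solid-state dewetting-lambda}, and the mass-lumped term $\int^{(h)}H_h^{m+1}n_h^m\cdot v_h^{m+1}$ cancels against \eqref{eq:num-solid-state dewetting-v}. Summing the tested equations then gives
\[
\int_{\Gamma_h^m}|\nabla_{\Gamma_h^m}H_h^{m+1}|^2+\int_{\Gamma_h^m}\nabla_{\Gamma_h^m}X_h^{m+1}\cdot\nabla_{\Gamma_h^m}v_h^{m+1}-\cos\theta\int_{\partial\Gamma_h^m}n_{\scriptscriptstyle\partial}^{m+\frac12}\cdot v_h^{m+1}=0 .
\]

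Next we multiply this identity by $\tau$. For the bulk term, inequality \eqref{area-ineq} applies unchanged to open triangulated surfaces, because it is an elementwise inequality on each triangle and so needs no closedness. It gives $\tau\int\nabla X_h^{m+1}\cdot\nabla v_h^{m+1}\ge|\Gamma_h^{m+1}|-|\Gamma_h^m|$.

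The boundary term is the step needing care. We need the identity
\[
\tau\int_{\partial\Gamma_h^m}n_{\scriptscriptstyle\partial}^{m+\frac12}\cdot v_h^{m+1}=|S_1^{m+1}|-|S_1^m| .
\]
Since $v_h^{m+1}\cdot e_3=0$ on the boundary, the contact line is a closed polygon in the plane $x_3=0$, and $X_h^{m+1}=\mathrm{id}+\tau v_h^{m+1}$ moves it piecewise linearly within that plane. Parametrize the boundary by $\rho\in[0,1]$ and write $Y(\rho,s)=\mathrm{id}+s\tau v_h^{m+1}$ for $s\in[0,1]$. The enclosed area $A(s)=\frac12\oint Y\times\partial_\rho Y\cdot e_3$ is a quadratic polynomial in $s$. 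Its exact increment is
\[
A(1)-A(0)=\int_0^1A'(s)\,ds=\oint\tau v_h^{m+1}\cdot\bigl(\overline{\partial_\rho Y}\times e_3\bigr),
\]
where $\overline{\partial_\rho Y}=\frac12(\partial_\rho\mathrm{id}+\partial_\rho X_h^{m+1})$. This holds because $A'(s)$ is affine in $s$, so the midpoint rule is exact. After converting $d\rho$ to the arc-length measure of $\partial\Gamma_h^m$, this is precisely $\tau\int n_{\scriptscriptstyle\partial}^{m+\frac12}\cdot v_h^{m+1}$, given \eqref{n-partial-def} and the paper's sign convention that $n_{\scriptscriptstyle\partial}$ points outward from $S_1$. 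This is the standard argument of \cite{Bao2023,jiang2021perimeter}. The orientation, i.e.\ that $\partial_s\mathrm{id}\times e_3$ is the outward normal, is exactly what I expect to require checking.

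Combining the pieces gives $\tau\int|\nabla H_h^{m+1}|^2+(|\Gamma_h^{m+1}|-|\Gamma_h^m|)-\cos\theta(|S_1^{m+1}|-|S_1^m|)\le0$, which yields \eqref{thm:area-decreasing-mass-sd-1-3D}. The main obstacle is the exact discrete area-increment identity above: verifying that the midpoint conormal \eqref{n-partial-def} reproduces the area change exactly, with the correct orientation and arc-length scaling on each boundary edge.
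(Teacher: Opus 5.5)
Your proposal is correct and follows the paper's proof. You use the same test functions $(v_h^{m+1},H_h^{m+1},\lambda_h^{m+1},\kappa_h^{m+1})$ with the same cancellations, then \eqref{area-ineq} for the bulk term and the exact identity $\int_{\partial\Gamma_h^m} n_{\scriptscriptstyle\partial}^{m+\frac12}\cdot(X_h^{m+1}-\mathrm{id}) = |S_1^{m+1}|-|S_1^m|$ for the boundary term. The only difference is that the paper quotes this identity from Lemma~3.1 of Bao et al., whereas you derive it yourself: $A'(s)$ is affine in $s$, so the midpoint rule is exact. That derivation is correct, with $|S_1^{m+1}|$ read as the signed area of the new contact polygon.
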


\begin{proof}
Choosing $(w_h,\phi_h,\eta_h,\varphi_h) =  (v_h^{m+1},H_h^{m+1},\lambda_h^{m+1},\kappa_h^{m+1})$ in \eqref{eq:num-solid-state dewetting}, and summing the resulting identities, we obtain
\begin{align*}
    &\int_{\Gamma_h^{m}} \nabla_{\Gamma_h^{m}} ({\rm id}+\tau v_h^{m+1})
    \cdot \nabla_{\Gamma_h^{m}}  v_h^{m+1}
    - \cos \theta
    \int_{\partial \Gamma_h^{m}} n_{\scriptscriptstyle\partial}^{m+\frac{1}{2}}
    \cdot v_h^{m+1} \\
    &=- \|\nabla_{\Gamma_h^{m}}  H_h^{m+1} \|_{L^2(\Gamma_h^m)}^2 
    \leq 0 .
\end{align*}
Rewriting ${\rm id}+\tau v_h^{m+1}$ as $X_h^{m+1}$ and $v_h^{m+1}$ as $(X_h^{m+1}-{\rm id})/\tau$, we have 
\begin{align*}
    &\int_{\Gamma_h^{m}} \nabla_{\Gamma_h^{m}} X_h^{m+1}
    \cdot \nabla_{\Gamma_h^{m}}  v_h^{m+1}
    - \frac{1}{\tau}\cos \theta
    \int_{\partial \Gamma_h^{m}} n_{\scriptscriptstyle\partial}^{m+\frac{1}{2}}
    \cdot (X_h^{m+1}-{\rm id}) \\
    &=- \|\nabla_{\Gamma_h^{m}}  H_h^{m+1} \|_{L^2(\Gamma_h^m)}^2 
    \leq 0 .
\end{align*}
Then, using \eqref{area-ineq} and the following identity (see \cite[Lemma~3.1]{Bao2023}):
\begin{align}\label{eq:boundary-area-solid-state dewetting}
    \int_{\partial \Gamma_h^{m}} n_{\scriptscriptstyle\partial}^{m+\frac{1}{2}}
    \cdot (X_h^{m+1} - {\rm id})
    = |S_1^{m+1}| - |S_1^{m}| ,
\end{align}
we obtain \eqref{thm:area-decreasing-mass-sd-1-3D}. 
\hfill\end{proof}

\subsection{Numerical experiments}\label{sec:numerical_1}
In this section, we present numerical experiments to illustrate the performance of the proposed dual-MDR schemes for open-surface evolution with a moving contact line on a substrate in the solid-state dewetting setting. 

\begin{example}\label{Example7}\upshape
We present numerical simulations of solid-state dewetting problem for an open surface with a moving contact line constrained to the plane $z=0$, where the initial surface is a rectangular box of size $1 \times 6 \times 1$, centered at $(0,0,0)$, with prescribed contact angle $120^\circ$, and the evolution is computed using the classical BGN scheme, the MDR scheme \eqref{eq:solid-state dewetting-cont}, and the dual-MDR scheme \eqref{eq:num-solid-state dewetting}. 

For mesh size $h=0.2$ and time-step size $\tau=10^{-2}$, both the MDR and dual-MDR schemes remain stable and maintain satisfactory mesh quality throughout the evolution; see Figures~\ref{Example7c_60-scalar} and~\ref{Example7c_60}; by contrast, the BGN scheme exhibits noticeable rotational artifacts near the substrate, which degrade the mesh quality in the vicinity of the contact line; see Figure~\ref{Example7b_60}. 

When the time-step size is reduced to $\tau=10^{-3}$, the BGN scheme becomes unstable and breaks down at time $T=0.277$; see Figure~\ref{Example7d_60}; in contrast, both the dual-MDR scheme and the MDR scheme remain stable up to the final time $T=2$ while preserving good mesh quality; see Figures~\ref{Example7e_60} and~\ref{Example7e_6_scalar}. For improved visualization, the surface evolution plots in this example are shown with appropriate scaling.

\begin{figure}[htbp]
    \centering
    \begin{subfigure}[b]{0.5\textwidth}
        \centering
        \includegraphics[width=0.85\textwidth]{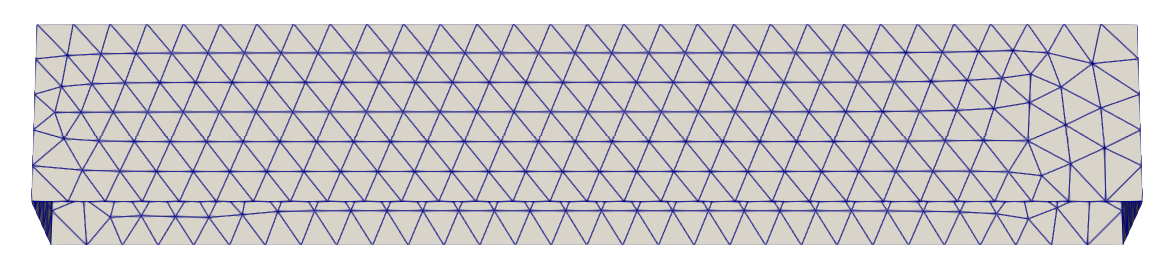}
        \caption{Initial surface}
        \label{Example7a_60}
    \end{subfigure}

    \vspace{10pt}

    \begin{subfigure}[b]{0.3\textwidth}
        \centering
        \includegraphics[width=0.6\textwidth]{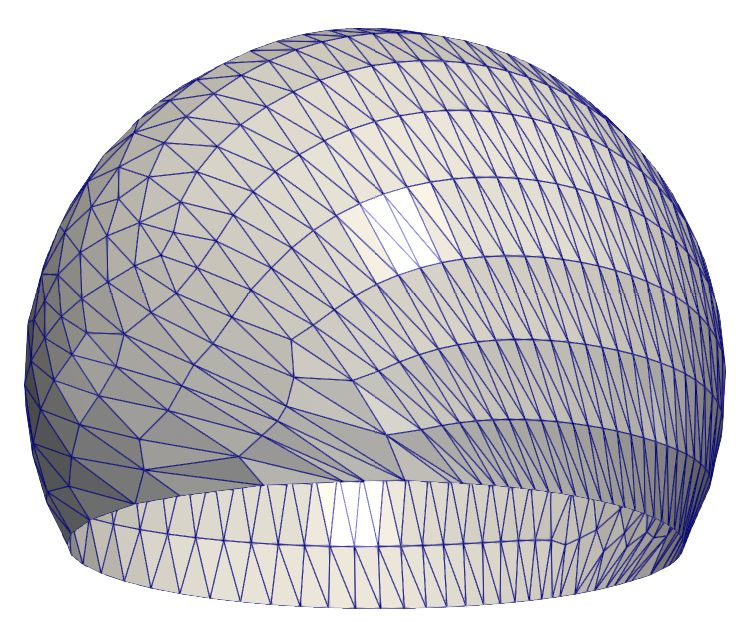}
        \caption{BGN at $T=2$ \\ with $\tau=10^{-2}$}
        \label{Example7b_60}
    \end{subfigure}
    \begin{subfigure}[b]{0.3\textwidth}
        \centering
        \includegraphics[width=0.6\textwidth]{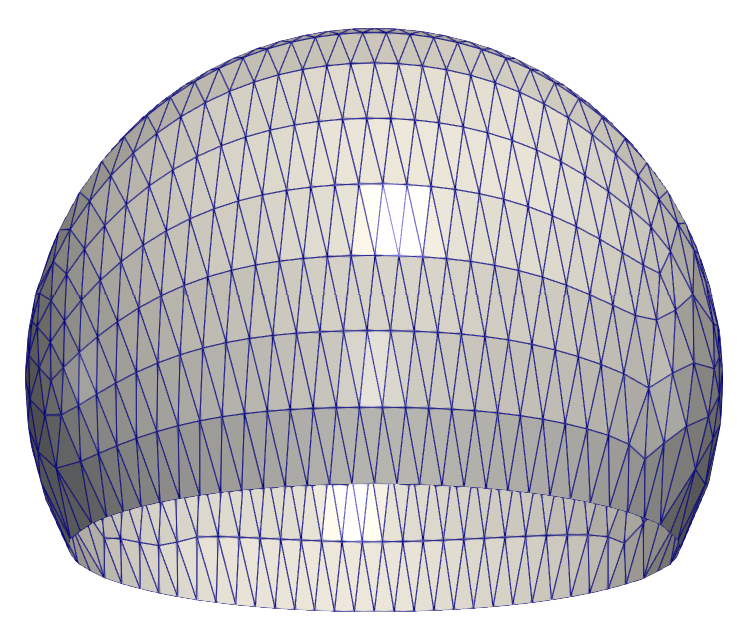}
        \caption{MDR at $T=2$ \\ with $\tau=10^{-2}$}
        \label{Example7c_60-scalar}
    \end{subfigure}
 \begin{subfigure}[b]{0.3\textwidth}
        \centering
        \includegraphics[width=0.6\textwidth]{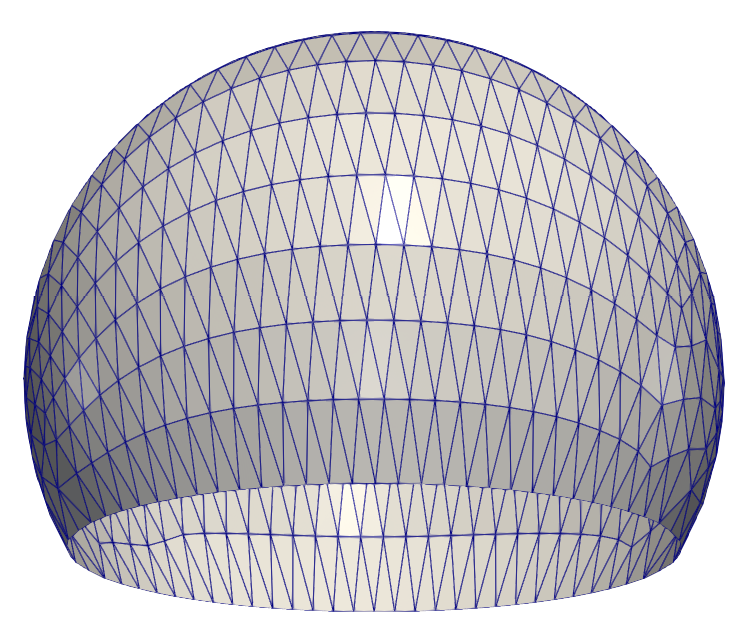}
        \caption{Dual-MDR at $T=2$ \\ with $\tau=10^{-2}$}
        \label{Example7c_60}
    \end{subfigure}
    \vspace{10pt}

    \begin{subfigure}[b]{0.3\textwidth}
        \centering
        \includegraphics[width=0.8\textwidth]{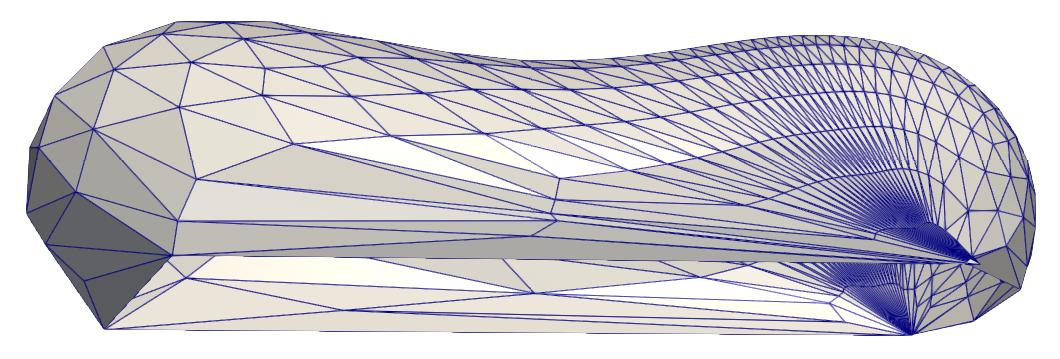}
        \caption{BGN  at $T=0.277$ \\ with $\tau=10^{-3}$}
        \label{Example7d_60}
    \end{subfigure}
    \begin{subfigure}[b]{0.3\textwidth}
        \centering
        \includegraphics[width=0.6\textwidth]{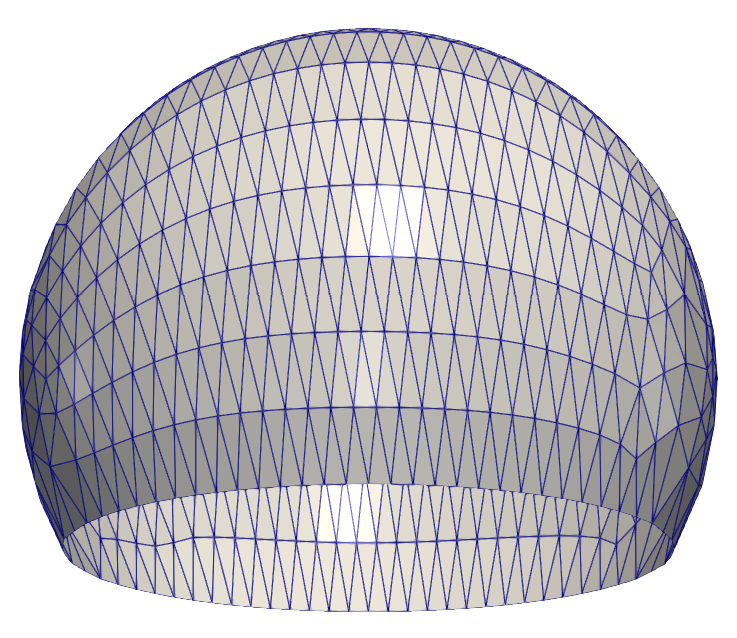}
        \caption{MDR at $T=2$ \\ with $\tau=10^{-3}$}
        \label{Example7e_6_scalar}
    \end{subfigure}
    \begin{subfigure}[b]{0.3\textwidth}
        \centering
        \includegraphics[width=0.6\textwidth]{Figures/120_Cube_1_1_6_13versionparaview/MDR_normal_tau1e-3_h0.2_MDR_normal_dual.png}
        \caption{Dual-MDR at $T=2$ \\ with $\tau=10^{-3}$ }
        \label{Example7e_60}
    \end{subfigure}
    \caption{Solid-state dewetting with a $120^\circ$ contact angle in Example~\ref{Example7}.}
    \label{eg:Example7_60}
\end{figure}

Figures~\ref{surface_area_example7} and~\ref{Example7e_120_area_difference} present a comparison of the surface-area decay for the BGN, MDR, and dual-MDR schemes with $\tau=10^{-3}$; moreover, as shown in Figure~\ref{surface_area_example7}, the dual-MDR scheme yields a smaller discrete surface area at each time step than both the BGN and MDR schemes. Figure~\ref{mesh_quality_example7} compares the mesh-quality metrics for the same three schemes with $\tau=10^{-3}$, and the results demonstrate the effectiveness of the proposed dual-MDR scheme in preserving mesh quality throughout the evolution, particularly in the presence of initial incompatibility, since the contact angle of the initial surface is $90^\circ$, whereas the equilibrium contact angle is $120^\circ$.

\begin{figure}[htbp]
\centering
\captionsetup[subfigure]{skip=2pt}

\begin{subfigure}[t]{0.333\textwidth}
  \centering
  \includegraphics[width=\linewidth]{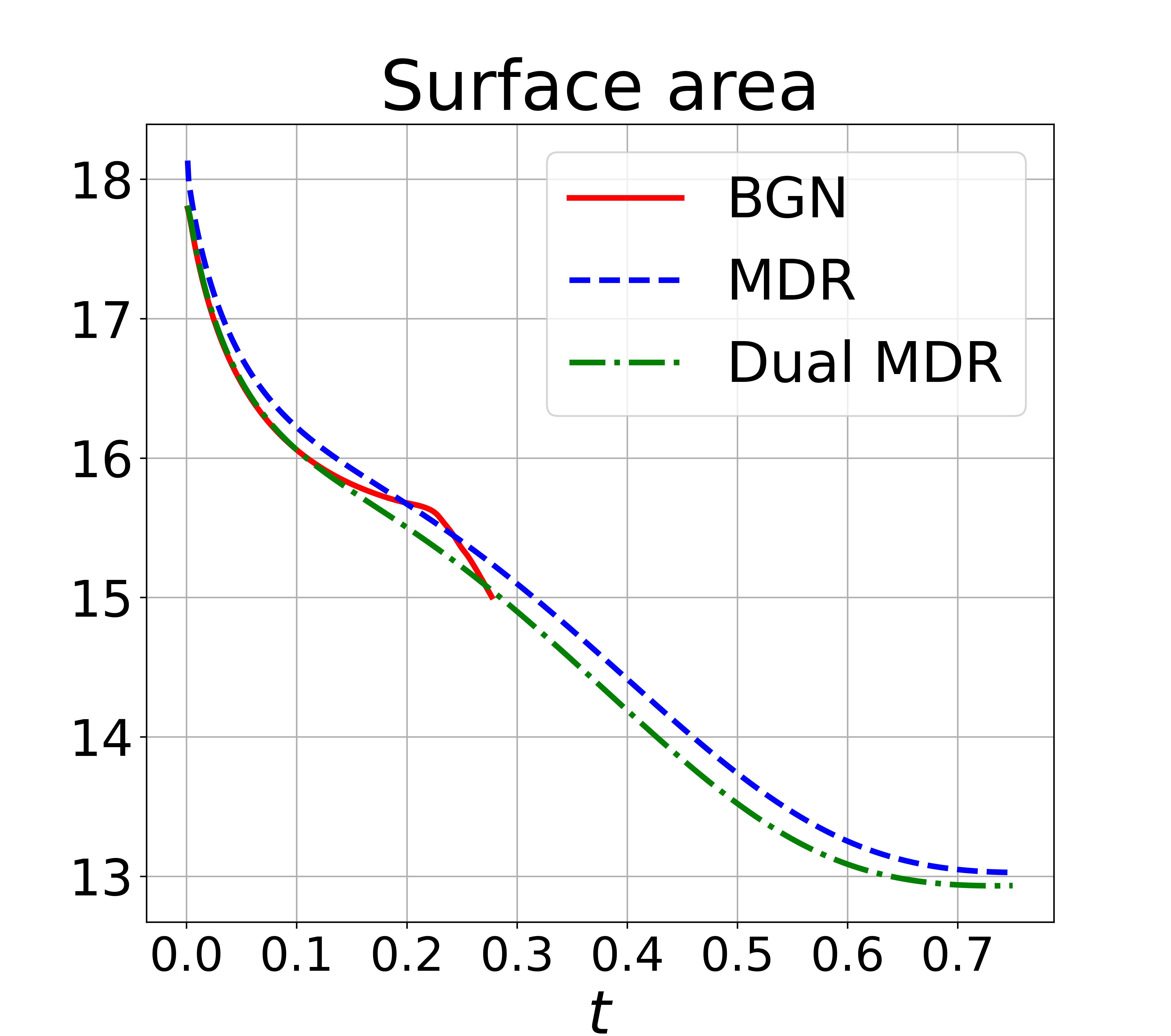}
  \caption{Surface area: $|\Gamma_h^m|$}
\label{surface_area_example7}
\end{subfigure}\hfill
\begin{subfigure}[t]{0.333\textwidth}
  \centering
  \includegraphics[width=\linewidth]{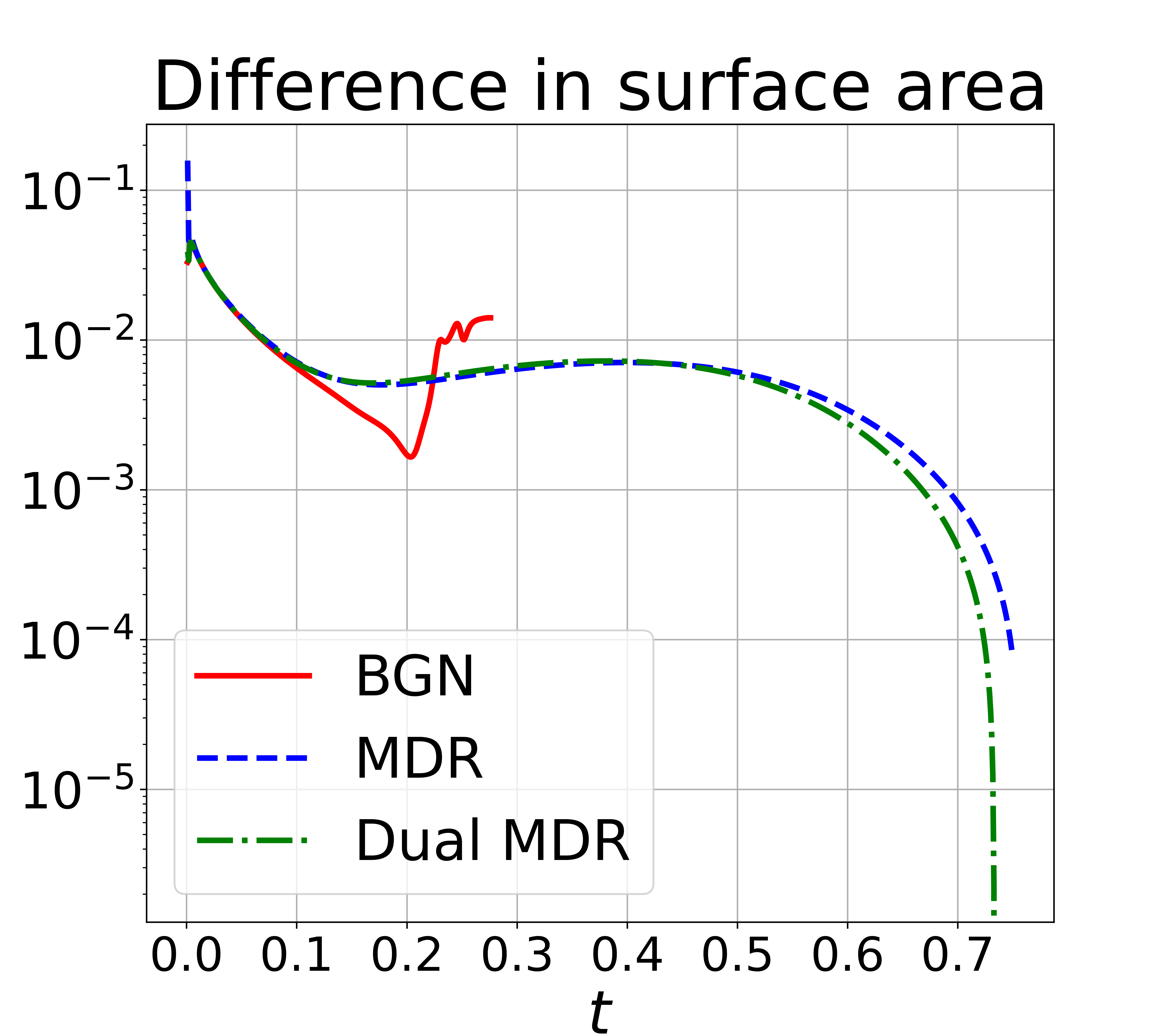}
  \caption{$|\Gamma_h^m| - |\Gamma_h^{m+1}|$}
  \label{Example7e_120_area_difference}
\end{subfigure}\hfill
\begin{subfigure}[t]{0.333\textwidth}
  \centering
  \includegraphics[width=\linewidth]{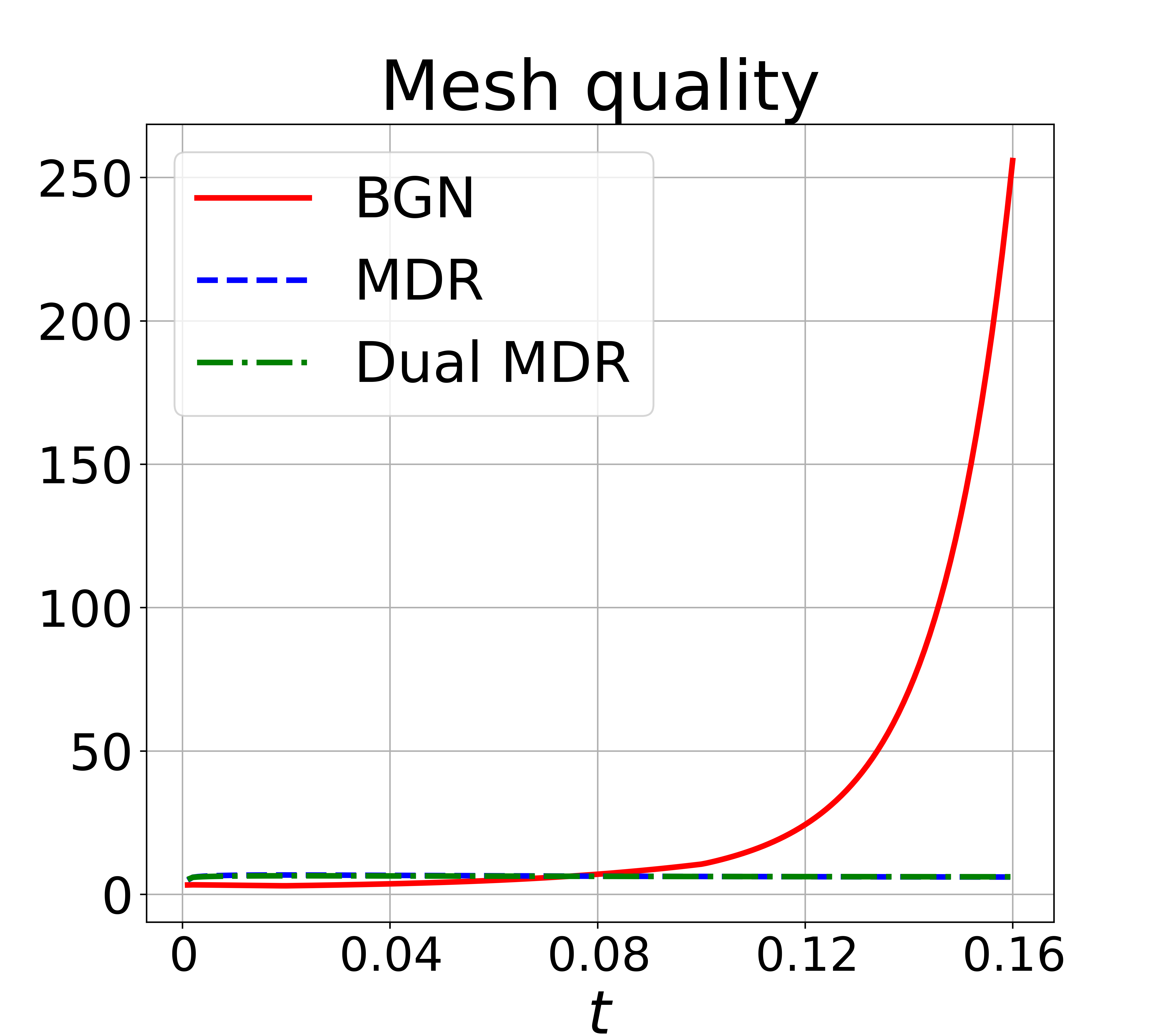}
  \caption{Mesh quality}
\label{mesh_quality_example7}
\end{subfigure}

\caption{Comparison of the surface-area decay and mesh quality in Example~\ref{Example7}.}
\label{eg:Example7_120}
\end{figure}


\end{example}

\begin{example}
\label{Example1-1-16}\upshape
We present numerical simulations of the solid-state dewetting problem for an open surface with a moving contact line constrained to the plane $z=0$, with prescribed equilibrium contact angle $90^\circ$, where the initial surface is a rectangular box of size $1 \times 1 \times 16$ centered at the origin, and all computations are performed on a quasi-uniform mesh with mesh size $h=0.2$. The evolution is computed using the BGN scheme, the MDR scheme \eqref{eq:solid-state dewetting-cont}, and the generalized dual-MDR scheme with the symmetric-gradient version of MDR tangential motion defined in \eqref{sym-MDR}.

For $\tau=10^{-2}$, all three schemes remain stable up to time $T=3.3$. The BGN scheme captures the pinch-off clearly; see Figure~\ref{Example1-1-16b}; however, the computation breaks down immediately afterward and cannot be continued to simulate the long-time evolution or compute the steady state. By contrast, the dual-MDR scheme preserves good mesh quality at $T=3.3$ and continues beyond pinch-off, thereby enabling long-time computation toward equilibrium; see Figures~\ref{Example1-1-16i} and~\ref{Example1-1-16j}. For the MDR scheme, the pinch-off is not completed numerically: at $T=3.33$, the two bulk components remain connected, and severe mesh distortion develops in the neck region, leading to failure at the next time step; see Figure~\ref{Example1-1-16c}.

For $\tau=10^{-3}$, the differences become more pronounced. In the BGN scheme, mesh points cluster near the pinch-off location, producing mesh distortion and causing the computation to break down shortly after the pinch-off time $T=3.243$; see Figure~\ref{Example1-1-16e}. The MDR scheme likewise fails to resolve the pinch-off accurately: severe mesh distortion develops as the solution approaches the pinch-off time $T=3.279$, eventually terminating the simulation; see Figure~\ref{Example1-1-16f}. By contrast, the dual-MDR scheme remains stable up to the final time $T=4$ while maintaining good mesh quality throughout the evolution; see Figures~\ref{Example1-1-16g} and~\ref{Example1-1-16m}. For improved visualization, the surface evolution plots in this example are shown with appropriate scaling.

\begin{figure}[htbp]
    \centering
    \captionsetup[subfigure]{skip=2pt}
    \begin{subfigure}[t]{0.6\textwidth}
        \centering
        \includegraphics[width=1\textwidth]{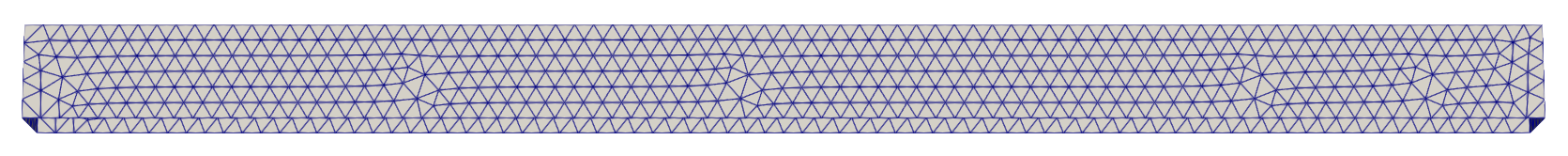}
        \caption{Initial surface}
        \label{Example1-1-16a}
    \end{subfigure}
    \vspace{10pt}

    \begin{subfigure}[t]{0.32\textwidth}
        \centering
        \includegraphics[width=1\textwidth]{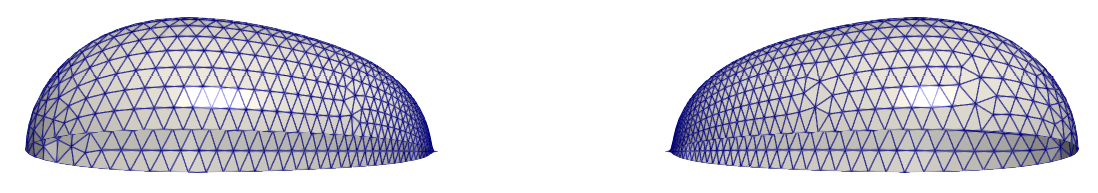}
        \caption{BGN at $T=3.34$ \\with $\tau=10^{-2}$ }
        \label{Example1-1-16b}
    \end{subfigure}
    \hfill
    \begin{subfigure}[t]{0.32\textwidth}
        \centering
        \includegraphics[width=1\textwidth]{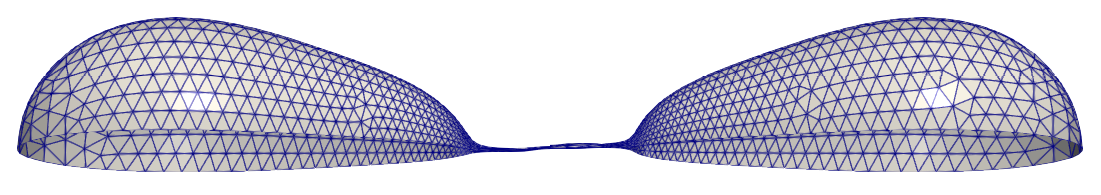}
        \caption{MDR at $T=3.33$ \\with $\tau=10^{-2}$ }
        \label{Example1-1-16c}
    \end{subfigure}
    \hfill
    \begin{subfigure}[t]{0.32\textwidth}
        \centering
        \includegraphics[width=1\textwidth]{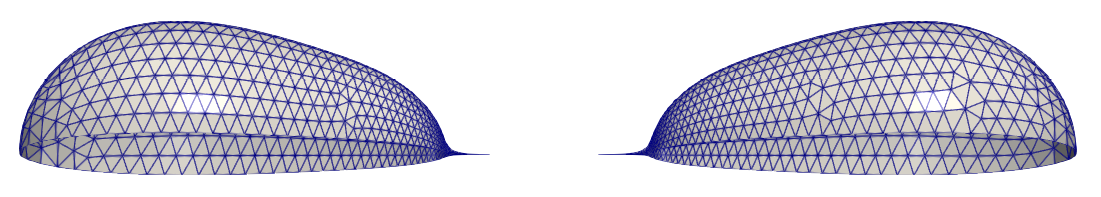}
        \caption{Dual-MDR at $T=3.3$ \\ with $\tau=10^{-2}$}
        \label{Example1-1-16d}
    \end{subfigure}

\vspace{10pt}

\begin{subfigure}[t]{0.32\textwidth}
        \centering
        \includegraphics[width=1\textwidth]{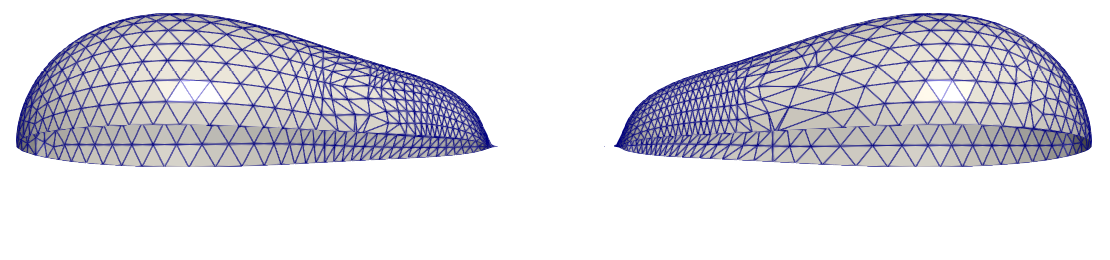}
        \caption{BGN at $T=3.243$ \\with $\tau=10^{-3}$ }
        \label{Example1-1-16e}
    \end{subfigure}
    \hfill
    \begin{subfigure}[t]{0.32\textwidth}
        \centering
        \includegraphics[width=1\textwidth]{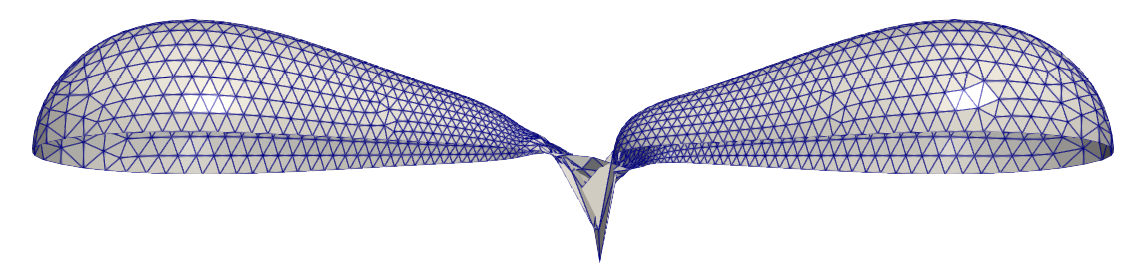}
        \caption{MDR at $T=3.279$ \\with $\tau=10^{-3}$ }
        \label{Example1-1-16f}
    \end{subfigure}
    \hfill
    \begin{subfigure}[t]{0.32\textwidth}
        \centering
        \includegraphics[width=1\textwidth]{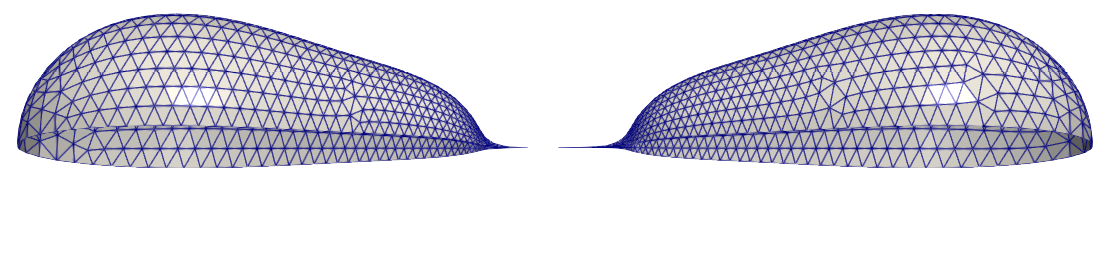}
        \caption{Dual-MDR at $T=3.25$ \\ with $\tau=10^{-3}$}
        \label{Example1-1-16g}
    \end{subfigure}

    \caption{Solid-state dewetting with a $90^\circ$ contact angle in Example~\ref{Example1-1-16}.}
    \label{eg:Example1-1-16}
\end{figure}

\begin{figure}[htbp]
    \centering
    \begin{subfigure}[b]{0.3\textwidth}
        \centering
        \includegraphics[width=1\textwidth]{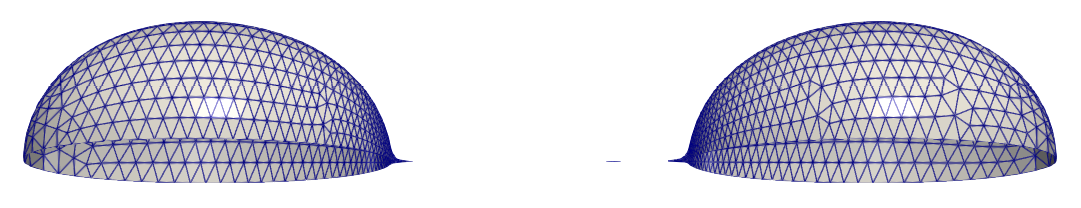}
        \caption{Dual-MDR at $T=3.5$ \\with $\tau=10^{-2}$ }
        \label{Example1-1-16i}
    \end{subfigure}
    \hspace{5pt}
    \begin{subfigure}[b]{0.3\textwidth}
        \centering
        \includegraphics[width=1\textwidth]{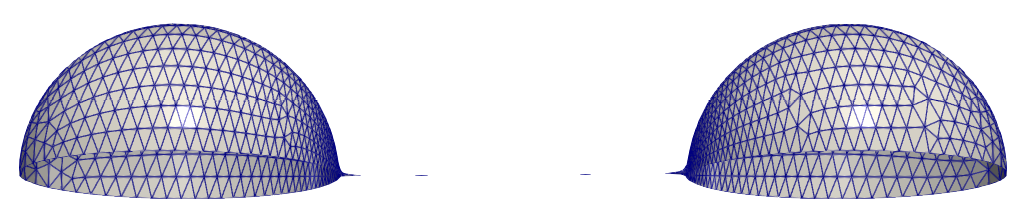}
        \caption{Dual-MDR at $T=4$ \\ with $\tau=10^{-2}$}
        \label{Example1-1-16j}
    \end{subfigure}
     \hspace{5pt}
    \begin{subfigure}[b]{0.3\textwidth}
        \centering
        \includegraphics[width=1\textwidth]{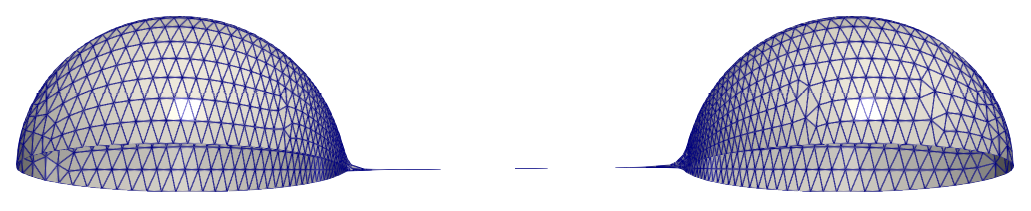}
        \caption{Dual-MDR at $T=4$ \\ with $\tau=10^{-3}$}
        \label{Example1-1-16m}
    \end{subfigure}
    \caption{Long-time behavior of the dual-MDR scheme in Example~\ref{Example1-1-16}. }
\label{long_time_example9}
\end{figure}

For $\tau = 10^{-3}$, Figures~\ref{Example9d_90_area} and~\ref{Example9e_90_area_difference} show the evolution of the surface area for the three methods, while Figure~\ref{mesh_quality_example9} compares the corresponding mesh-quality metrics. The results clearly demonstrate the superior performance of the proposed dual-MDR scheme in preserving mesh quality throughout the evolution.

\begin{figure}[htbp]
\centering
\captionsetup[subfigure]{skip=2pt}

\begin{subfigure}[t]{0.333\textwidth}
  \centering
  \includegraphics[width=\linewidth]{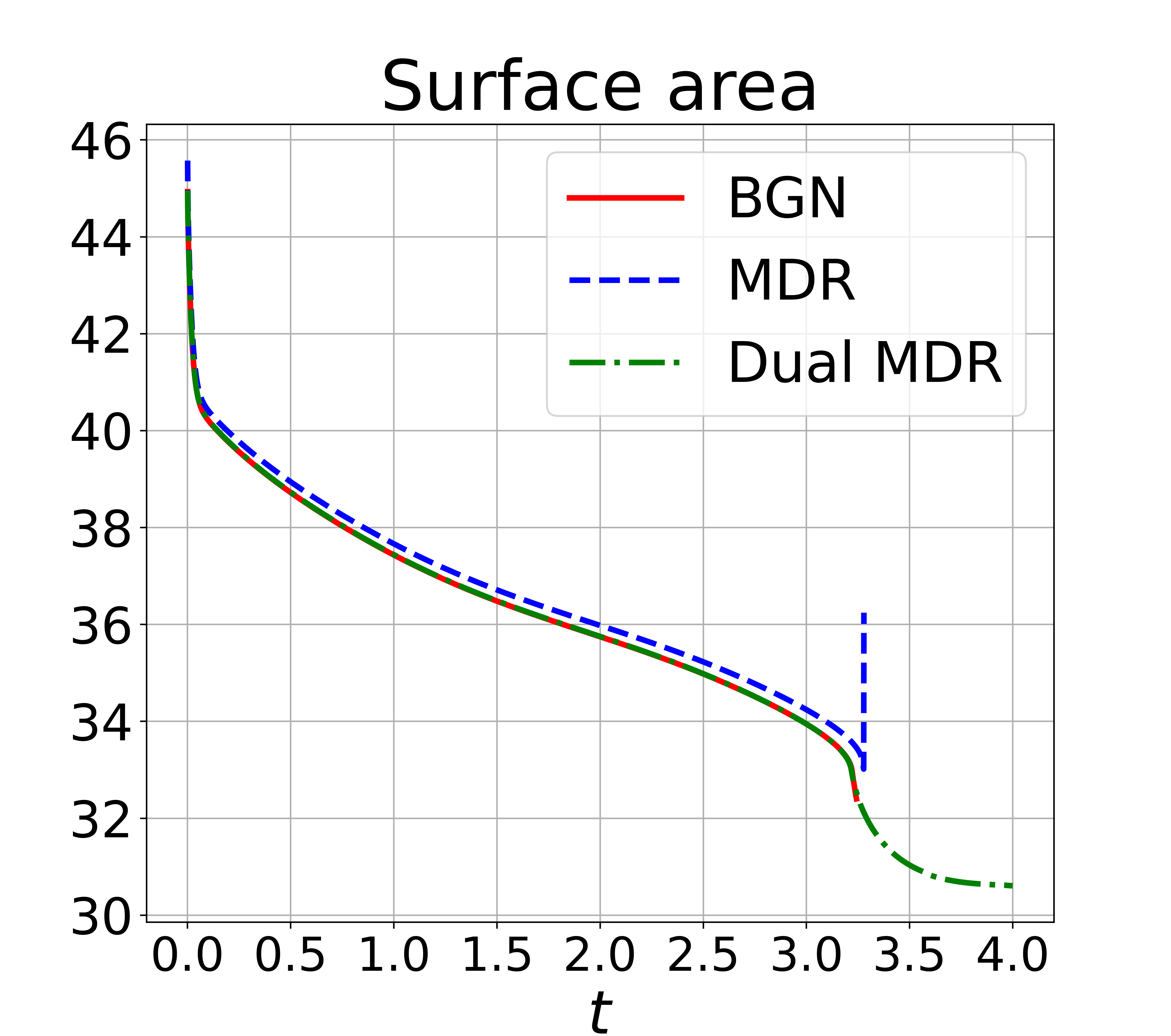}
  \caption{Surface area: $|\Gamma_h^m|$}
  \label{Example9d_90_area}
\end{subfigure}\hfill
\begin{subfigure}[t]{0.333\textwidth}
  \centering
  \includegraphics[width=\linewidth]{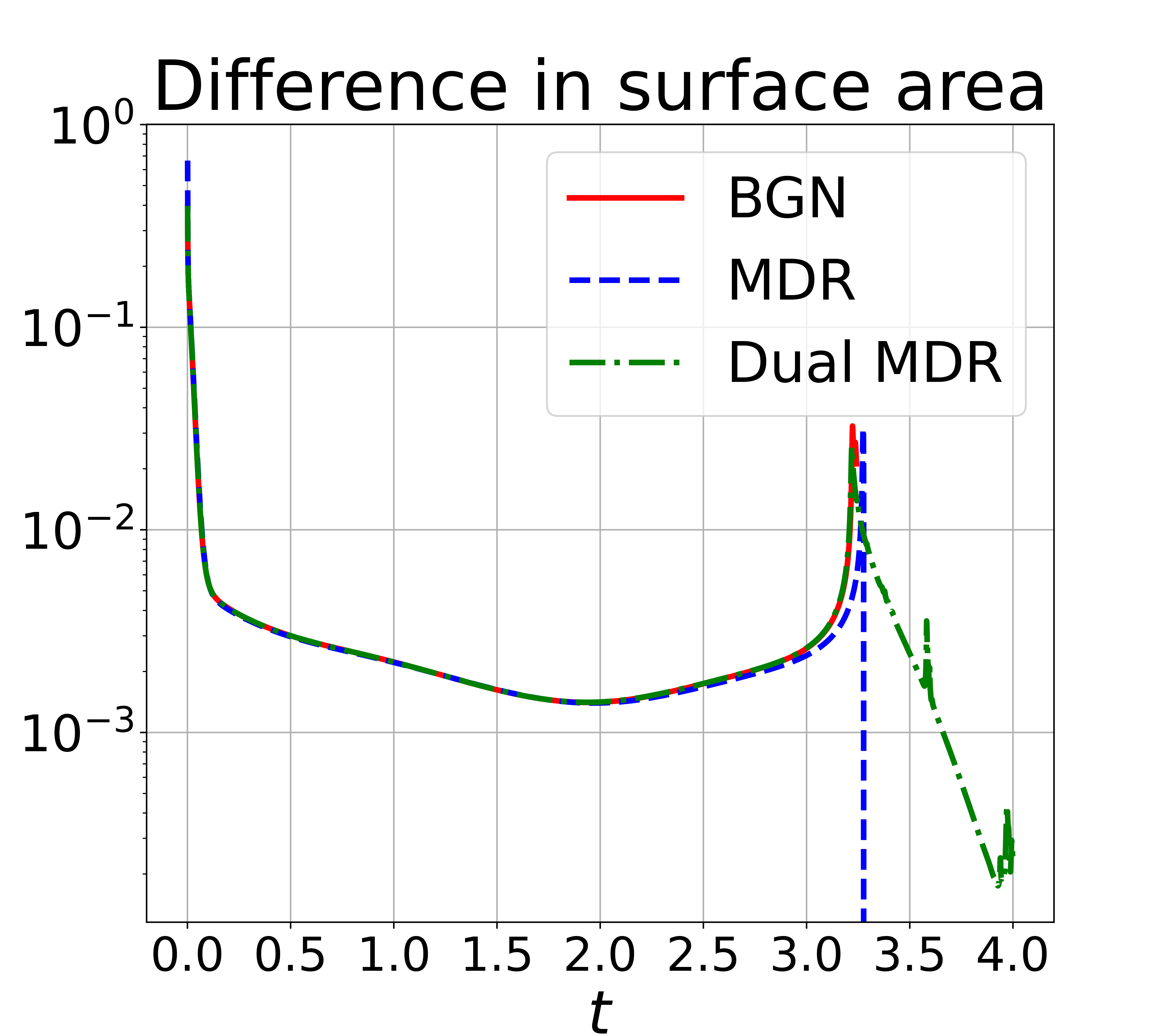}
  \caption{$|\Gamma_h^m| - |\Gamma_h^{m+1}|$}
  \label{Example9e_90_area_difference}
\end{subfigure}\hfill
\begin{subfigure}[t]{0.333\textwidth}
  \centering
  \includegraphics[width=\linewidth]{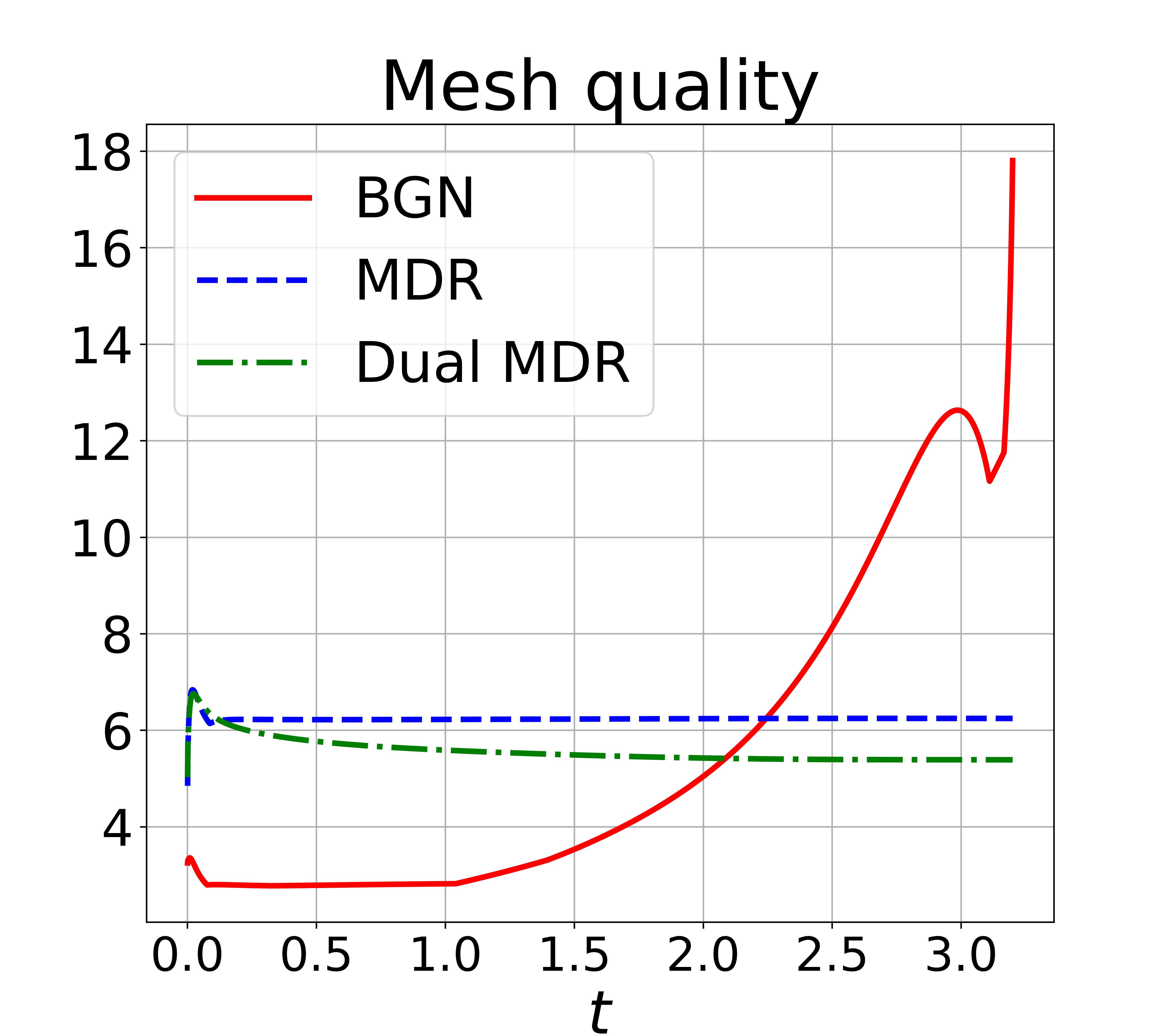}
  \caption{Mesh quality}
  \label{mesh_quality_example9}
\end{subfigure}

\caption{Comparison of the surface-area decay and mesh quality in Example~\ref{Example1-1-16}.}
\label{eg:Example9_90}
\end{figure}

\end{example}

\section{Conclusion}

By introducing a dual multiplier at the continuous level, we have constructed dual formulations for several curvature-driven geometric evolutions, including mean curvature flow, surface diffusion, and the solid-state dewetting problem. These dual formulations are equivalent to the original curvature flows, but they make the underlying energy structure explicit in a way that extends naturally to fully discrete, linearly implicit numerical schemes. As a principal application, we use the dual formulations to design linearly implicit and energy-stable methods for curvature flows equipped with the minimal-deformation-rate (MDR) tangential motion. The resulting dual-MDR schemes simultaneously maintain good mesh quality and guarantee energy stability. In addition, we have extended the approach to other tangential motions (see Section \ref{section:tangential}). Numerical experiments demonstrate that the proposed methods simultaneously maintain good mesh quality and guarantee energy stability over a wide range of time-step sizes, effectively addressing incompatible initial conditions in solid-state dewetting and accurately resolving pinch-off singularities, while preserving mesh quality up to the onset of the
singularity. Overall, the dual formulations introduced in this paper offer a novel framework for the design of structure-preserving numerical schemes for geometric evolution problems.

\renewcommand{\refname}{\bf References}
\bibliographystyle{abbrv}
\bibliography{main}

\end{document}